\newtheorem{definition}{Definition}[section]
\newtheorem{proposition}{Proposition}[section]
\newtheorem{theorem}{Theorem}[section]
\newtheorem{lemma}{Lemma}[section]
\definecolor{newcolor}{rgb}{.8,.349,.1}
\begin{document}


\begin{frontmatter}

\title{Learning Hamiltonian systems with Pseudo-symplectic neural network }

\author[1]{Xupeng Cheng}
\author[1]{Lijin Wang}
\author[2]{Yanzhao Cao}
\author[1]{Chen Chen}

\address[1]{School of Mathematical Sciences, University of Chinese Academy of Sciences, Yu Quan Lu 19 (Jia), Beijing 100049, China}
\address[2]{Department of Mathematics and Statistics, Auburn University, Auburn, USA}


\begin{abstract}
Hamiltonian systems are fundamental and crucial  in both classical and quantum mechanics. Extracting governing differential equations of Hamiltonian systems from observational data can enhance understanding complex systems, predicting their dynamical behavior, and compensate the limitations of traditional modeling relying on first principles. 
Recent research on data-based neural network detection of Hamiltonian systems has focused on structure-preserving learning methods aiming to incorporate the intrinsic properties such as the symplecticity into the neural network architecture. Symplectic integrators have been used to generate the forward propagation to endow the neural network with symplectic structure. However this was mostly applied to separable Hamiltonian systems because symplectic integrators are in general implicit. Rescues for non-separable situations such as
symplectic networks (SympNets), non-separable symplectic neural networks (NSSNNs), generating function neural networks (GFNNs) etc. either use implicit algorithms or explicit algorithms with high dimensional neural networks in both widith and depths, making these algorithms the computationally intensive.  
In this paper we propose a novel one layer explicit  Pseudo-Symplectic Neural Network (PSNN) for learning non-separable  Hamiltonian systems.  This yields explicit neural network mapping with nearly exact symplecticity. To enhance the accuracy neural network simulation and compensate the loss of accuracy due to non-exact symplectcity, we design the learnable Pad\'e-type activation functions for neural network output. Our numerical experiments demonstrate that Pad\'e activation function outperform the classical ReLU activation function,  the Taylor activation function, and  activation function PAU. 
Our numerical experiments also demonstrate that our method is versatile in learning and forecasting a wide array of Hamiltonian systems, from the two-dimensional modified pendulum to the four-dimensional galactic dynamical system, surpassing conventional or cutting-edge benchmark neural networks in terms of prediction accuracy, long-term stability and energy-preservation, with shorter training period, smaller sample size and fewer network parameters. 
\end{abstract}

\begin{keyword}
 Hamiltonian systems\sep neural networks \sep pseudo-symplectic methods\sep Pad\'e approximations \sep activation functions 
\end{keyword}

\end{frontmatter}



\section{Introduction}
Hamiltonian systems form a crucial class of dynamical systems with applications spanning various fields, from classical to quantum mechanics. Any motion with negligible dissipation  can be described by Hamiltonian systems (\cite{fengqinbook}), including phenomena such as the motion of celestial bodies, molecules, ideal pendulums, etc. A fundamental characteristic of these systems is the preservation of the symplectic structure in their phase spaces, which geometrically corresponds to the conservation of area along the evolution of the phase flow. Numerical discretization methods that retain this property are known as symplectic methods. Both theoretical analysis and empirical studies have demonstrated that these methods significantly outperform general numerical approaches in solving Hamiltonian systems, particularly with respect to stability and long-term prediction (\cite{fengqinbook,Hairer}).

With the development of machine learning, neural networks are becoming powerful tools for extracting governing differential equations from discrete time series of observational data of the evolution trajectories. ODE-nets (\cite{h-1, h-3, h-6}), PDE-nets (\cite{long1, long2,pinn}), and SDE-nets (\cite{bib-r2, bib-r5,bib-r7,bib-r4,bib-r66}) have been established to deal with the learning of ordinary, partial and stochastic differential equations.



Recently, learning Hamiltonian systems has drawn more and more attention.  In particular,  Hamiltonian Neural Network (HNN) (\cite{h-2})  pioneers the study on structure-preserving learning of differential equations. It proposed to learn the Hamiltonian systems by neural networks that respect the systems' physical conservation law such as the preservation of the energy (the Hamiltonian function), and  realized the Hamiltonian-preserving  neural network by directly parameterizing the Hamiltonian function instead of the total vector field as was done with regular neural ODEs, and trained the feedforward network using observations of both the state $\boldsymbol{y}$ of the Hamiltonian system and its time derivative $\dot{\boldsymbol{y}}$. 
Since then, numerous studies have explored neural networks capable of preserving the symplectic structure of underlying Hamiltonian systems while learning their governing equations. Many of these approaches build upon the fundamental idea of Hamiltonian Neural Networks (HNN), where the Hamiltonian function is parameterized using neural networks. This parameterization enables symplectic training, testing, and prediction by incorporating symplectic numerical methods into the network’s loss function or forward propagation, thereby ensuring a neural network with an inherent symplectic structure.
In \cite{h-4}, Zhu et al.  showed that applying symplectic integrators to generating the dynamical propagation of HNN can guarantee the existence of network target and improve the accuracy and generalization ability of HNN. In \cite{h-8}, David et al. established the Symplectic Hamiltonian Neural Networks (SHNNs), which apply symplectic integrators in the loss function to train a modified Hamiltonian and then corrected it to derive the true Hamiltonian with high accuracy using their relationship described by the Hamilton-Jacobi theory. Chen et al. (\cite{h-3}) proposed the use of multi-step symplectic leapfrog method in HNN to form their Symplectic Recurrent Neural Networks (SRNNs) for learning separable Hamiltonian systems.

There are also symplectic neural networks that are not based on HNNs, such as the reversible SympNet proposed by Jin et al. (\cite{h-5}). SympNet constructs complex symplectic networks by stacking simple symplectic modules, and it can approximate any symplectic mapping without computing gradients or using numerical integrators. However, achieving sufficient expressivity typically requires a sufficiently deep network composed of multiple simple symplectic modules. Additionally, the activation function must satisfy the 
$r$-finite condition (\cite{h-5}) to ensure universal approximation capability. Another approach for learning non-separable Hamiltonian systems is the Nonseparable Symplectic Neural Network (NSSNN), introduced by Xiong et al. (\cite{h-7}). NSSNN is built upon the explicit symplectic methods for non-separable Hamiltonian systems presented in \cite{Tao}. The explicitness of the symplectic scheme for non-separable Hamiltonian systems is realized by augmenting the original Hamiltonian system to be of double dimension through introducing auxiliary independent variables. This enables neural networks with symplectic structure even for non-separable Hamiltonian systems, but at the cost of the enlargement of dimension, as well as the inevitable calculations caused by composing sub-networks.  In \cite{tao}, Chen and Tao introduced the Generating Function Neural Network (GFNN) to learn exact symplectic mappings by parameterizing the generating function rather than the Hamiltonian function as in HNNs. This method ensures exact symplecticity without relying on numerical integrators, allowing it to learn both separable and non-separable Hamiltonian systems while preserving symplecticity. However, GFNN is implicit requiring fixed-point iterations for forward propagation, which adds computational overhead.

The existing constructions of symplectic neural networks for detecting and predicting Hamiltonian systems—whether as symplectic integrators or mappings—often rely on implicit integrators, iterative prediction steps, or increasing the network’s dimension through dimension augmentation or sub-network composition to enforce explicit symplectic mappings. However, these approaches significantly increase the computational complexity of the neural network. 

To reduce the computational complexity,Tong et al. \cite{h-6} proposed a novel symplectic neural network called   Taylor-net.   In this model, the gradient of the Hamiltonian function is parameterized by two sub-networks, each with a symmetric Jacobian, leveraging Taylor activations and Taylor series expansion. A fourth-order symplectic Runge-Kutta integrator is then employed as the overall dynamical propagator. However, the Taylor-net is limited to separable Hamiltonian systems.

In this paper, we propose a  Pseudo-Symplectic Neural Network (PSNN) for learning non-separable Hamiltonian systems. In PSNN, we  utilize pseudo-symplectic integrators \cite{Aubry} instead of traditional symplectic ones. These integrators preserve symplecticity to a high degree while remaining explicit mappings, enabling efficient and lightweight neural networks. 
To mitigate the loss of accuracy introduced by pseudo-symplecticity, we propose learnable Padé-type activation functions, inspired by Padé approximation theory, as a replacement for the Taylor activations used in \cite{h-6}. Unlike the existing Padé Activation Unit (PAU, \cite{pau}), our approach is specifically designed to enhance the performance of pseudo-symplectic neural networks (PSNNs) and make  the structure-preservation error negligible. Both theoretical analysis and numerical experiments demonstrate that integrating the pseudo-symplectic integrator with Padé-type activations significantly improves the learning of Hamiltonian systems, ranging from non-separable to separable and from low- to high-dimensional cases. Compared to benchmark neural networks employing conventional or state-of-the-art activation functions including ODE-net, HNN, and Taylor-net, as well as PSNN with ReLU, PAU and Taylor activations, our method achieves superior performance in terms of prediction accuracy, robustness, and efficiency, requiring fewer training samples, network parameters, and epochs.

The rest of the paper is arranged as follows. Section \ref{sec1} introduces some mathematical preliminaries and related works of the paper such as about the Hamiltonian systems and pseudo-symplectic methods, as well as the Taylor-nets, HNN and PAU. The learning framework of our PSNN, including the definition and analysis of the Pad\'e-type activation functions, and the pseudo-symplectic network architecture are presented in Section \ref{sec3}. Section \ref{tests} are numerical experiments on several Hamiltonian systems with non-separable or separable Hamiltonian functions, comparing the behavior of our PSNN using Pad\'e-type activation function with conventional or cutting-edge neural networks and activation functions. Section \ref{sec5} is a brief conclusion.

\section{Preliminaries}
\label{sec1}
\subsection{Hamiltonian systems and pseudo-symplectic methods}
A Hamiltonian system has the following form
\begin{align}\label{1}
\left\{\begin{array}{l}
\frac{d \boldsymbol{q}}{d t}=\frac{\partial H}{\partial \boldsymbol{p}},\quad \boldsymbol{p}(0)=\boldsymbol{p}_0, \\
\frac{d\boldsymbol{p}}{d t}=-\frac{\partial H}{\partial \boldsymbol{q}},\quad \boldsymbol{q}(0)=\boldsymbol{q}_0,
\end{array}\right.
\end{align}
where $t\ge 0$, $\boldsymbol{p}(t), \, \boldsymbol{q}(t)\in \mathbb{R}^d$ represents the generalized momentum and position respectively, and $H(\boldsymbol{p},\boldsymbol{q})$ is a smooth scalar function called the Hamiltonian function. Denote $\boldsymbol{y}=(\boldsymbol{p}^T,\boldsymbol{q}^T)^T$, the Hamiltonian system \eqref{1} can also be represented as
\begin{align}\label{2}
\frac{d\boldsymbol{y}}{dt}= J^{-1}\nabla H(\boldsymbol{y}),\quad \boldsymbol{y}(0)=\boldsymbol{y}_0,
\end{align}
where $\boldsymbol{y}_0=(\boldsymbol{p}_0^T,\boldsymbol{q}_0^T)^T$, $J=\left(\begin{array}{cc}
0 & I_d \\
-I_d & 0
\end{array}\right)$ with $I_d$ being the $d$-dimensional identity matrix. 

It has been proved that for any $\boldsymbol{y}_0\in \mathbb{R}^{2d}$ and $t\ge 0$, the flow $\Psi(t, \boldsymbol{y}_0): \boldsymbol{y}_0\rightarrow \boldsymbol{y}(t)$ of the Hamiltonian system \eqref{2} preserves the symplectic structure (see e.g. \cite{fengqinbook, feng1984}), i.e., 
\begin{align}\label{3}
\frac{\partial\Psi(t,\boldsymbol{y}_0)}{\partial \boldsymbol{y}_0}^T J\frac{\partial\Psi(t,\boldsymbol{y}_0)}{\partial \boldsymbol{y}_0}=J,
\end{align}
which can be geometrically interpreted as the conservation of area along the phase flow in the phase space. Symplecticity is the essential intrinsic property of Hamiltonian systems. A one-step numerical discretization method of \eqref{2} with time-step $h$, denoetd by $\Phi(h,\boldsymbol{y}_n):\boldsymbol{y}_n\rightarrow \boldsymbol{y}_{n+1}$ $(n=0,1,\cdots)$, is called a symplectic method, if it can preserve the symplecticity, namely if it satisfies
\begin{align}\label{3}
\frac{\partial \Phi(h,\boldsymbol{y}_n)}{\partial \boldsymbol{y}_n}^T J \frac{\partial \Phi(h,\boldsymbol{y}_n)}{\partial \boldsymbol{y}_n}=J, \quad \forall n=0, 1,\cdots.
\end{align}
Inheriting the geometric structure of the original systems, symplectic methods are proved theoretically and empirically to have profound numerical behavior with respect to accuracy and stability, superior to general-purpose methods, especially in long-term simulation (see e.g. \cite{Hairer}). 

However, symplectic methods are typically implicit, except for the cases when they are applied to linear or certain separable Hamiltonian systems. The implementation of implicit integrators is usually complex and computationally intensive. To get explicit symplecticity-preserving integrators for non-linear and non-separable systems, one can explore the application of the pseudo-symplectic methods which were proposed in {\cite{Aubry}}, and defined as follows:
\begin{definition}\label{d-1} {\cite{Aubry}}
A one-step numerical method $\Phi(h,\boldsymbol{y}_n)$ with step-size $h$ applied to the
Hamiltonian system \eqref{2} is called pseudo-symplectic of pseudo-symplectic order $s$, if it  satisfies
\begin{align}\label{pseud0}
\frac{\partial\Phi(h,\boldsymbol{y}_n)}{\partial \boldsymbol{y}_n}^TJ\frac{\partial\Phi(h,\boldsymbol{y}_n)}{\partial \boldsymbol{y}_n}=J+\mathcal{O}(h^{s+1}), \quad \forall n=0,1,\cdots.
\end{align}
\end{definition}
Although these methods lose certain (usually small) degree  of accuracy in symplecticity-preservation, they can be explicit and possess high convergence order, as demonstrated by the following explicit pseudo-symplectic Runge-Kutta method of convergence order 4 and pseudo-symplectic order 8 \cite{Misha}:
\begin{align}\label{RK}
\boldsymbol{\xi}_1&=\boldsymbol{y}_n,\quad \boldsymbol{\xi}_i = \boldsymbol{y}_n + h\sum_{j=1}^{i-1}a_{ij}\boldsymbol{F}_j \,\,\,\,(i=2,\cdots,7), \quad
\boldsymbol{F}_j=J^{-1}\nabla H(\boldsymbol{\xi}_j)\,\,(j=1,\cdots,7),\nonumber\\
\boldsymbol{y}_{n+1} &= \boldsymbol{y}_n + h\sum_{j=1}^{7}b_j\boldsymbol{F}_j,
\end{align}
where the parameters of the Runge-Kutta method are as follows with $\gamma=\frac{1}{4(2-\sqrt[3]{2})}$:
\begin{table}[h]
\centering
\begin{tabular}{c|ccccccc}
0\\
2$\gamma$&2$\gamma$\\
4$\gamma$& 0&4$\gamma$\\
1/2&2$\gamma$&0&1/2-2$\gamma$\\
1-4$\gamma$&0&4$\gamma$&0&1-8$\gamma$\\
1-2$\gamma$&2$\gamma$&0&1/2-2$\gamma$&0&1/2-2$\gamma$\\
1&0&4$\gamma$&0&1-8$\gamma$&0&4$\gamma$\\
\hline
&$\gamma$&2$\gamma$&1/4-$\gamma$&1/2-4$\gamma$&1/4-$\gamma$&2$\gamma$&$\gamma$\\
\end{tabular}
\caption{Parameters of the Runge-Kutta method \eqref{RK}.}
\label{tab:my_label0}
\end{table}


\subsection{Related work}
\subsubsection{The Taylor-nets}
Given observational phase trajectory data of the Hamiltonian system \eqref{2}:
\begin{align}\label{dataH0}
\mathcal{D}=\{(\boldsymbol{y}_0^i,y_1^i)|i=1,\cdots,N\},
\end{align}
neural network inversion of the governing ODE can be typically accomplished by the ODE-nets, which parameterize the latent vector field function $f(\boldsymbol{y}):=J^{-1}\nabla H(y)$ by $f_{net}(\boldsymbol{y},\theta)$ and incorporate it into a certain numerical integrator (such as the Euler method with time step $h$) to form the loss at time $t_1=t_0+h$ corresponding to the observation time of $\boldsymbol{y}_1^i$, i.e. 
$$Loss=\frac{1}{N}\sum_{i=1}^N\|\hat{\boldsymbol{y}}_1^i-\boldsymbol{y}_1^i\|^2$$
with 
$$\hat{\boldsymbol{y}}_1^i=y_1^i+hf_{net}(\boldsymbol{y}_1^i,\theta).$$
The ODE-nets are usually fully-connected feedforward neural networks, which are trained by using optimizers with backpropagation or its improved variants (\cite{h-1}).   

The HNN method (\cite{h-2}) begins the structure-preserving learning of Hamiltonian systems, which is committed to not only get the governing ODE but also capture its special structure. The HNN realized the structure recognition (energy preservation) by parameterizing the Hamiltonian function $H(\boldsymbol{y})$ of the Hamiltonian system, instead of learning the entire vector field function $f(\boldsymbol{y})$ as was done with the general ODE-nets, since the output $f_{net}(\boldsymbol{y},\theta^*)$ of general ODE-nets may not be the gradient of a certain function after being multiplied by $J$ from the left, owing to the training errors, which would consequently lead to erasure of the symplectic gradient flow structure of the original system. The loss function used by HNN is 
$$ Loss_{HNN}=\left\|\frac{\partial H_{net}}{\partial \boldsymbol{p}}-\frac{d \boldsymbol{q}}{d t}\right\|_2+\left\|\frac{\partial H_{net}}{\partial \boldsymbol{q}}+\frac{d \boldsymbol{p}}{d t}\right\|_2,$$
which needs observations of $(\dot{\boldsymbol{p}},\dot{\boldsymbol{q}})$. By learning the Hamiltonian function $H(\boldsymbol{y})$, the HNN acquires the symplectic structure of the system and thus enables more accurate prediction, energy preservation and faster training.

Taylor-nets (\cite{h-6}) introduce a new way of learning separable Hamiltonian systems, where the Hamiltonian function can be split into a function of $\boldsymbol{p}$ and a function of $\boldsymbol{q}$, namely,
\begin{align}
H(\boldsymbol{p},\boldsymbol{q})=T(\boldsymbol{p})+U(\boldsymbol{q}).
\end{align}
Taylor-nets inherited the structure-preserving idea, while directly learning the gradient of the Hamiltonian function $H(\boldsymbol{y})$, i.e. $T_{\boldsymbol{p}} 
\,\,(=\frac{\partial H}{\partial \boldsymbol{p}})$ and $U_{\boldsymbol{q}} 
\,\,(=\frac{\partial H}{\partial \boldsymbol{q}})$, instead of learning $H(\boldsymbol{y})$. This however poses a challenge to their network construction, since as gradients, the networks of them,  $T_{\boldsymbol{p}}(\boldsymbol{p},\theta_{\boldsymbol{p}})$ and $U_{\boldsymbol{q}}(\boldsymbol{q},\theta_{\boldsymbol{q}})$ should have symmetric Jacobian matrices. To realize this, they proposed the following form of networks:  
\begin{align}\label{8}
\boldsymbol{T}_{\boldsymbol{p}}\left(\boldsymbol{p}, \boldsymbol{\theta}_{\boldsymbol{p}}\right)=\left(\sum_{i=1}^S \boldsymbol{A}_i^T \circ f_i \circ \boldsymbol{A}_i-\boldsymbol{B}_i^T \circ f_i \circ \boldsymbol{B}_i\right) \circ \boldsymbol{p}+\boldsymbol{b}_1,
\end{align}
\begin{align}\label{9}
\boldsymbol{U}_{\boldsymbol{q}}\left(\boldsymbol{q}, \boldsymbol{\theta}_{\boldsymbol{q}}\right)=\left(\sum_{i=1}^S \boldsymbol{C}_i^T \circ f_i \circ \boldsymbol{C}_i-\boldsymbol{D}_i^T \circ f_i \circ \boldsymbol{D}_i\right) \circ \boldsymbol{q}+\boldsymbol{b}_2,
\end{align}
where 'o' denotes the function composition, $A_i, B_i,C_i,D_i$ are matrices of learnable network parameters with $d$ columns, $b_1,b_2$ are $d$-dimensional bias vectors of learnable network parameters, and the activation functions $f_i$ are the terms in the Taylor expansion:
\begin{align}
f_i(x)=\frac{1}{i!}x^i,
\end{align}
which acts element-wise on a $d$-dimensional vector, and are called Taylor activations. It is not difficult to check that the Jacobian matrices of \eqref{8} and \eqref{9}, i.e. $T_{\boldsymbol{pp}}(\boldsymbol{p},\theta_{\boldsymbol{p}})$ and $U_{\boldsymbol{qq}}(\boldsymbol{q},\theta_{\boldsymbol{q}})$ are both symmetric. Moreover, the use of the Taylor expansion form enhances the expressive capability of the neural network (\cite{h-6}).

For structure-preservation the Taylor-nets embedded the nets \eqref{8} and \eqref{9} into a symplectic Runge-Kutta numerical scheme which is explicit for separable Hamiltonian systems, to forward the inputs by fully-connected feedforward neural network with symplectic structure. 

The symplecticity-preserving network architecture together with the Taylor activation functions make the Taylor-nets significantly superior to other classical neural networks in learning Hamiltonian systems, in terms of prediction accuracy, convergence rate and robustness, with only small sample size and training data, as well as short training period.   

\subsubsection{The Pad\'e activation unit (PAU) }
The PAU (\cite{pau}) was proposed considering the excellent performance of the Pad\'e approximations to most existing activation functions,  and the merits of making activation functions learnable. Concretely, the PAUs are activation functions of the following form
\begin{align}\label{pau1}
F(x)=\frac{P(x)}{Q(x)}=\frac{\sum_{j=0}^m a_j x^j}{1+\left|\sum_{k=1}^n b_k x^k\right|}=\frac{a_0+a_1 x+a_2 x^2+\cdots+a_m x^m}{1+\left|b_1 x+b_2 x^2+\cdots+b_n x^n\right|},
\end{align}
where $a_i$ and $b_i$ are learnable parameters, which can be trained together with other network parameters, and bring flexibility to the activation function. The absolute value in the denominator is designed to avoid zero value of the denominator. 

The application of PAUs was shown to be able to increase the predictive performance and robustness of the neural networks.  

\section{Our method}\label{sec3}
\subsection{The learning framework}\label{sec31}
Given observational phase trajectory data \eqref{dataH0} of the Hamiltonian system \eqref{2}:
\begin{align}\label{dataH}
\mathcal{D}=\{(\boldsymbol{y}_0^i,y_1^i)|i=1,\cdots,N\},
\end{align}
where we assume that the time interval between each pair of observations $\boldsymbol{y}_0^i$ and $\boldsymbol{y}_1^i$ is $T$. Our object is to derive the ODE of the Hamiltonian system \eqref{2} from the observational data set $\mathcal{D}$ via neural network that captures the symplectic structure of the system. 

Unlike the Taylor-nets, we do not need to assume that $H(\boldsymbol{y})$ is separable. To preserve the symplectic gradient flow structure, we parameterize the gradient of the entire Hamiltonian function, i.e. $H_{\boldsymbol{y}}:=\frac{\partial H}{\partial \boldsymbol{y}}$ by neural network $H_{\boldsymbol{y,net}}(\boldsymbol{y},\theta)$ with learnable parameters $\theta$.
Given the input $\boldsymbol{y}_0^i$, the initial value of the Hamiltonian system at time $t_0$, denote the output of our network by $\hat{\boldsymbol{y}}_1^i$, which is the network approximation to the state of the Hamiltonian system at time $t_1=t_0+T$, i.e., $\boldsymbol{y}_1^i$. Then it should hold according to \eqref{2} that
\begin{align}\label{haty1}
\hat{\boldsymbol{y}}_1^i = \boldsymbol{y}_0^i + \int_0^TJ^{-1}H_{\boldsymbol{y},net}(\boldsymbol{y}(t),\theta)dt.
\end{align}
To implement the right-hand side of \eqref{haty1}, we set up the Pseudo-Symplectic Neural Network (PSNN), which is a fully-connected feedforward neural network processed by iteratively using (K times) an explicit pseudo-symplectic numerical integrator $\Phi$ with time-step $h$ satisfying $h=T/K$. Then the output $\hat{\boldsymbol{y}}_1^i$ of the PSNN can be expressed as: 
\begin{align}\label{14}
\hat{\boldsymbol{y}}_1^i = \Phi^K(h, \boldsymbol{y}_0^i, H_{\boldsymbol{y},net}),
\end{align}
where 
\begin{align*}
\Phi^K:={\underbrace{\Phi\circ \Phi\circ\cdots\circ\Phi}_{\mbox{$K$ times}}},
\end{align*}
and the schematic diagram of the PSNN is demonstrated in Figure \ref{psnn}.

By Theorem \ref{thmpseu} in section \ref{psnnstr}, $\Phi ^ K$ is still pseudo-symplectic for our chosen pseudo-symplectic integrator $\Phi$, which indicates that the PSNN defines a pseudo-symplectic transmission of input data. The reason for selecting the pseudo-symplectic scheme instead of the symplectic ones is due to the usual implicitness of the latter when applied to non-separable non-linear Hamiltonian systems, which hinders fluent transmission of input signals in the network, while the pseudo-symplectic scheme can be explicit even for non-separable non-linear systems. This makes the PSNN capable of simulating the flow of all Hamiltonian systems instead of only separable ones, with only small bias of symplecticity that produces almost negligible effect in empirical analysis.

Based on \eqref{14}, we define the loss function of the PSNN to be the mean-square discrepancy between $\hat{\boldsymbol{y}}_1^i$ and $\boldsymbol{y}_1^i$:
\begin{align}\label{loss1}
Loss_{PSNN} = \frac{1}{N}\sum_{i=1}^{N}\|\hat{\boldsymbol{y}}_1^i - \boldsymbol{y}_1^i\|_2^2.
\end{align}
Next we introduce our construction of $H_{\boldsymbol{y},net}$ and the selection of the integrator $\Phi$.
\subsection{Construction of $H_{y,net} $ with Pad\'e-type activation functions}
Considering the significant role of Pad\'e approximations in approximation theory, as well as the requirements on activation functions in neural networks, we propose the following Pad\'e-type activation functions:
\begin{definition}\label{pt0}
A Pad\'e-type activation function of degree $(L,M)$, denoted by $PT^{L,M}:\mathbb{R}\rightarrow\mathbb{R}$ is defined as
\begin{align}\label{ptf}
PT^{L,M}(x):=\frac{\sum_{j=0}^L c_j x^j}{d_M(x)},
\end{align}
where $L,M\in \mathbb{Z}_{+}$, $L\neq M$, $d_M(x)$ is a prescribed polynomial of degree $M$ which has no roots in $\mathbb{R}$, and the parameters $c_j$ $(j=0,\cdots,L)$ in the numerator are learnable. 
\end{definition}
As activation functions, the $PT^{L,M}$ should enable neural networks where they are embedded to satisfy the universal approximation theorem (\cite{Cybenko,Hornik,Pinkus}). Next we show this via referring to the following definition and proposition from \cite{Kidger}. 
\begin{definition}(\cite{Kidger})\label{d1}
Let $\rho: \mathbb{R} \rightarrow \mathbb{R}$ and $n, m, k \in \mathbb{N}$. Then let $\mathcal{N} \mathcal{N}_{n, m, k}^\rho$ represent the class of functions $\mathbb{R}^n \rightarrow \mathbb{R}^m$ described by feedforward neural networks with $n$ neurons in the input layer, $m$ neurons in the output layer, and an arbitrary number of hidden layers, each with $k$ neurons with activation function $\rho$. Every neuron in the output layer has the identity activation function.
\end{definition}

\begin{proposition}(\cite{Kidger})\label{pr1}
Let $\rho: \mathbb{R} \rightarrow \mathbb{R}$ be any continuous nonpolynomial function which is continuously differentiable at at least one point, with nonzero derivative at that point. Let $U \subset \mathbb{R}^n$ be compact. Then $\mathcal{N N}_{n, m, n+m+1}^\rho$ is dense in $C\left(U ; \mathbb{R}^m\right)$ with respect to the uniform norm.
\end{proposition}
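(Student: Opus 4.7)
The plan is to establish density by exploiting the narrow width $n+m+1$ through a ``register'' architecture. The classical Pinkus-style universal approximation theorem guarantees that one-hidden-layer networks with activation $\rho$ and sufficiently many hidden units uniformly approximate any $f \in C(U;\mathbb{R}^m)$ on the compact set $U$. My task is therefore to re-encode such a wide shallow network as a deep network of width exactly $n+m+1$, while controlling the approximation error accumulated across layers.

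The architectural allocation I would use in every hidden layer is: $n$ neurons forming an ``input register'' that relays the input $x$ essentially unchanged through all layers, $m$ neurons forming an ``output register'' that progressively stores the approximated components $f_1(x),\ldots,f_m(x)$ as they are finalized, and one ``work'' neuron that acts as a running accumulator for the component currently under construction. Processing the output components sequentially, I would assign the $j$-th shallow approximation $\widehat{f}_j(x) = \sum_{\ell=1}^{W_j}\alpha_{j,\ell}\,\rho(w_{j,\ell}\cdot x + \beta_{j,\ell})$ to $W_j$ consecutive layers, each of which adds a single summand into the work neuron; after the last of these layers the accumulated value is transferred into the $j$-th output slot and the work neuron is reset to begin the next component.

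The crucial gadget enabling relays, transfers, and resets is an approximate identity built from $\rho$. By hypothesis there exists $x_0$ with $\rho'(x_0)\neq 0$, so for small $\varepsilon>0$ and bounded $t$ one has
\begin{equation*}
\frac{1}{\varepsilon\,\rho'(x_0)}\bigl(\rho(\varepsilon t + x_0) - \rho(x_0)\bigr) = t + O(\varepsilon),
\end{equation*}
uniformly in $t$ on any fixed compact. Wrapping a single $\rho$-neuron in appropriate affine pre- and post-maps thus yields a one-layer operation that approximates the identity to arbitrary precision. Applied in parallel to each register coordinate, this lets the contents of the $n+m$ register neurons pass through a hidden layer with only an $O(\varepsilon)$ perturbation, while the work neuron is simultaneously used productively to absorb a new shallow summand or to execute the copy-and-reset step.

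The hard part is the quantitative error control. Each identity gadget introduces an $O(\varepsilon)$ perturbation that compounds over the total depth $\sum_j W_j$ and interacts with the Lipschitz constant of $\rho$ on the compact envelope of reachable internal states. The argument should proceed by first fixing the shallow approximations and thus all parameters $\alpha_{j,\ell}, w_{j,\ell}, \beta_{j,\ell}$ and depths $W_j$ needed to achieve a target accuracy $\delta/2$; second, bounding the Lipschitz constant of $\rho$ on the compact set of all internal states the construction can produce when $\varepsilon$ is, say, at most $1$; and third, choosing $\varepsilon$ small enough that the cumulative uniform error from every identity gadget, amplified by these Lipschitz bounds, stays below $\delta/2$. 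The main obstacle is precisely this bookkeeping: keeping the reachable state set compact independent of $\varepsilon$, and showing that the compounding of perturbations remains linear (not exponential) in the depth because each register transport is a near-isometric affine correction rather than an iterated nonlinear map. Once these estimates are in hand, density of $\mathcal{NN}^{\rho}_{n,m,n+m+1}$ in $C(U;\mathbb{R}^m)$ follows by triangle inequality between the true $f$, its shallow approximant, and the narrow deep network output.
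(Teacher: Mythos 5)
This proposition is imported verbatim from \cite{Kidger}; the paper gives no proof of it, only the citation, so there is no in-paper argument to compare against. Your sketch is essentially a reconstruction of the proof in that reference: the ``register model'' with $n$ neurons relaying the input, $m$ neurons holding finished outputs, one working neuron, sequential processing of the $m$ components of a wide one-hidden-layer approximant (whose existence is the Leshno--Lin--Pinkus--Schocken theorem for continuous nonpolynomial activations), and the approximate-identity gadget $t \mapsto \bigl(\rho(\varepsilon t + x_0) - \rho(x_0)\bigr)/\bigl(\varepsilon\,\rho'(x_0)\bigr)$, with $\varepsilon$ chosen after the depth is fixed. That is the right architecture and the right order of quantifiers, and the worry about linear versus exponential compounding is unnecessary: once the depth is fixed, any finite compounding is killed by sending $\varepsilon \to 0$.

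One detail of your allocation does not work as literally stated and should be repaired. You ask the single work neuron to be ``a running accumulator'' to which each layer ``adds a single summand,'' with a transfer to the $j$-th output slot only at the end. A neuron in a given layer computes $\rho(\text{affine function of the previous layer})$; it cannot simultaneously evaluate the fresh nonlinear term $\rho(w_{j,\ell}\cdot x + \beta_{j,\ell})$ \emph{and} hold the updated partial sum, since the new term is not available as a previous-layer value to be folded into the affine map. The standard fix --- and what the cited construction actually does --- is to let the work neuron compute only the fresh term $\rho(w_{j,\ell}\cdot \tilde{x} + \beta_{j,\ell})$ at each layer, while the $j$-th \emph{output register} neuron carries the running partial sum: at the next layer it applies the identity gadget to (its own previous value) $+\,\alpha_{j,\ell}\cdot$(work neuron's previous value). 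With that relocation of the accumulator, and with the observation that continuous differentiability near $x_0$ gives the $O(\varepsilon)$ identity estimate uniformly on compact $t$-sets, your argument goes through and matches the source.
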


From Proposition \ref{pr1} we get the following result.

\begin{theorem}\label{t1}
For any given $L,M\in\mathbb{Z}_{+}$ $(L\neq M)$, the class of feedforward neural networks  $\mathcal{N N}_{n, m, n+m+1}^{PT^{L,M}}$ with Pad\'e-type activation function $PT^{L,M}$ are dense in $C\left(U ; \mathbb{R}^m\right)$ with respect to the uniform norm, where $n,m\in \mathbb{N}$ and $U\subset{\mathbb{R}^{n}}$ is compact.
\end{theorem}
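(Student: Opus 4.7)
The plan is to deduce the theorem directly from Proposition \ref{pr1} by verifying its hypotheses for a suitable realization of $PT^{L,M}$. Since the coefficients $c_j$ in the numerator are learnable, the class $\mathcal{NN}_{n,m,n+m+1}^{PT^{L,M}}$ contains, as a subclass, the networks obtained by fixing each $c_j$ to any particular value. Hence density of any such fixed-parameter subclass immediately implies density of the full learnable class.

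First, I would note that $PT^{L,M}\in C^\infty(\mathbb{R})$, independently of the coefficients. Indeed, by hypothesis $d_M(x)\neq 0$ on $\mathbb{R}$, so $1/d_M\in C^\infty(\mathbb{R})$, and multiplying by a polynomial numerator preserves smoothness. In particular $PT^{L,M}$ is continuous on $\mathbb{R}$ and continuously differentiable at every point, giving three of the four hypotheses of Proposition \ref{pr1} for free.

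Next, I would fix the specific choice $c_0=1$, $c_1=\cdots=c_L=0$ and consider the resulting activation $\rho(x)=1/d_M(x)$. With $\deg d_M=M\ge 1$, one has $\rho(x)\to 0$ as $|x|\to\infty$, whereas any nonconstant polynomial is unbounded on $\mathbb{R}$; so $\rho$ cannot be a nonconstant polynomial. And $\rho$ is not constant, since $\rho'(x)=-d_M'(x)/d_M(x)^2$ and $d_M'$ is a nonzero polynomial of degree $M-1$, so it vanishes only on a finite set. Choosing any $x_0$ with $d_M'(x_0)\neq 0$ then yields $\rho'(x_0)\neq 0$, which establishes both the nonpolynomial property and the nonzero-derivative condition.

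Invoking Proposition \ref{pr1} with this $\rho$ shows that $\mathcal{NN}_{n,m,n+m+1}^{\rho}$ is dense in $C(U;\mathbb{R}^m)$ under the uniform norm; since this is contained in $\mathcal{NN}_{n,m,n+m+1}^{PT^{L,M}}$, the stated density follows. The only non-mechanical step is verifying the nonpolynomial property, and the main (very mild) obstacle is the tacit use of $\deg d_M\ge 1$, which is natural in the definition of a Padé-type activation (otherwise $PT^{L,M}$ collapses to a polynomial and Proposition \ref{pr1} does not apply).
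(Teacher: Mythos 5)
Your proof is correct and follows the same overall strategy as the paper---both reduce the theorem to Proposition \ref{pr1}---but your verification of its hypotheses is genuinely different and, in fact, more complete. The paper works with $PT^{L,M}$ for arbitrary, unspecified coefficients: it observes that $d_M$ having no real roots gives continuous differentiability everywhere, and argues that $L\neq M$ forces $PT^{L,M}$ to be nonconstant, hence to have nonzero derivative at some point. It never addresses the remaining hypothesis of Proposition \ref{pr1}, namely that the activation be \emph{nonpolynomial}; this can genuinely fail for particular coefficient choices (if $L>M$ and $d_M$ divides the numerator, $PT^{L,M}$ reduces to a nonconstant polynomial, which passes every check the paper performs yet falls outside the scope of Proposition \ref{pr1}), just as the nonconstancy claim fails when all $c_j=0$. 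Your move of restricting to the fixed instance $c_0=1$, $c_1=\cdots=c_L=0$, i.e.\ $\rho=1/d_M$, sidesteps both issues: the decay of $1/d_M$ at infinity rules out polynomiality, $d_M'\not\equiv 0$ supplies a point of nonzero derivative, and density of this fixed-coefficient subclass transfers to the full learnable class. So your argument buys rigor---it patches two gaps the paper's proof leaves open---at the harmless cost of establishing density through one particular parameter setting rather than for a ``generic'' activation.
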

\begin{proof}
According to Proposition \ref{pr1}, we only need to check that for any given $L,M\in\mathbb{R}_{+}$ $(l\neq M)$, the function $PT^{L,M}(x)$ is continuously differentiable at at least one point, with nonzero derivative at that point. Denoting the numerator of $PT^{L,M}(x)$ by $N_L(x)$, we have
\begin{align}\label{deri}
\frac{d}{dx}PT^{L,M}(x)=\frac{N'_{L}(x)d_M(x)-N_{L}(x)d'_{M}(x)}{d_M(x)^2}.
\end{align}
Since $d_{M}(x)$ has no roots in $\mathbb{R}$, then $PT^{L,M}(x)$ is continuously differentiable everywhere in $\mathbb{R}$. On the other hand, due to $L\neq M$, it is true that $PT^{L,M}(x)$ is not a constant, which implies that $\frac{d}{dx}PT^{L,M}(x)\nequiv 0$. That is, there exists at least one point $x_0\in\mathbb{R}$ such that $\frac{d}{dx}PT^{L,M}(x_0)\neq 0$. This ends the proof.
\end{proof}

Similar to the Taylor-nets where gradients of functions are to be approximated by neural networks, our PSNN uses $H_{\boldsymbol{y},net}$ to approximate the gradient of the entire Hamiltonian function $H$ which can be nonseparable, i.e. $\frac{\partial H}{\partial \boldsymbol{y}}$. In this way the
$H_{\boldsymbol{y},net}$ needs to have symmetric Jacobian matrix. To achieve this, we follow the structure of $T_{\boldsymbol{p}}$ and $U_{\boldsymbol{q}}$ in the Taylor-nets given in \eqref{8}-\eqref{9} to build the network with one hidden layer for $H_{\boldsymbol{y},net}$ as:
\begin{align}\label{hynetform}
H_{\boldsymbol{y},net}(\boldsymbol{y},\theta)=\sum_{i=1}^S A_i^T\circ PT_i^{L,M} \circ A_i\circ \boldsymbol{y}-\sum_{i=1}^S B_i^T\circ PT_i^{L,M}\circ B_i\circ \boldsymbol{y}+b,
\end{align}
where 
\begin{align}\label{pti}
PT_i^{L,M}(x):=\frac{\sum_{j=0}^Lc_{ij}x^j}{d_M(x)}, \quad i=1,\cdots, S
\end{align}
are Pad\'e-type activation functions with $c_{ij}$ being learnable parameters for $i=1,\cdots,S$, $j=0,\cdots,L$. These functions act element-wise on the vector $A_i\circ \boldsymbol{y}$ to form a new vector with the same dimension of $A_i\circ\boldsymbol{y}$. The  matrices $A_i,\,\,B_i$ of learnable parameters are of dimension $l\times 2d$ with $l$ being the number of hidden neurons. The bias vector $b$ is of dimension $2d$. Thus the parameters of $H_{\boldsymbol{y},net}$ include the weights $A_i,\,B_i$, the bias $b$, as well as the learnable parameters $c_{ij}$ in the Pad\'e-type activation functions, namely
\begin{align}\label{thetaall}
\theta=\{A_i,\,B_i,\,b,\,c_{ij}|i=1,\cdots,S, j=0,\cdots,L\}.
\end{align}

The schematic diagram of the network $H_{\boldsymbol{y},net}$ is shown in Figure \ref{Hynet}.
\begin{figure}[h]
\centering
\includegraphics[width=\linewidth]{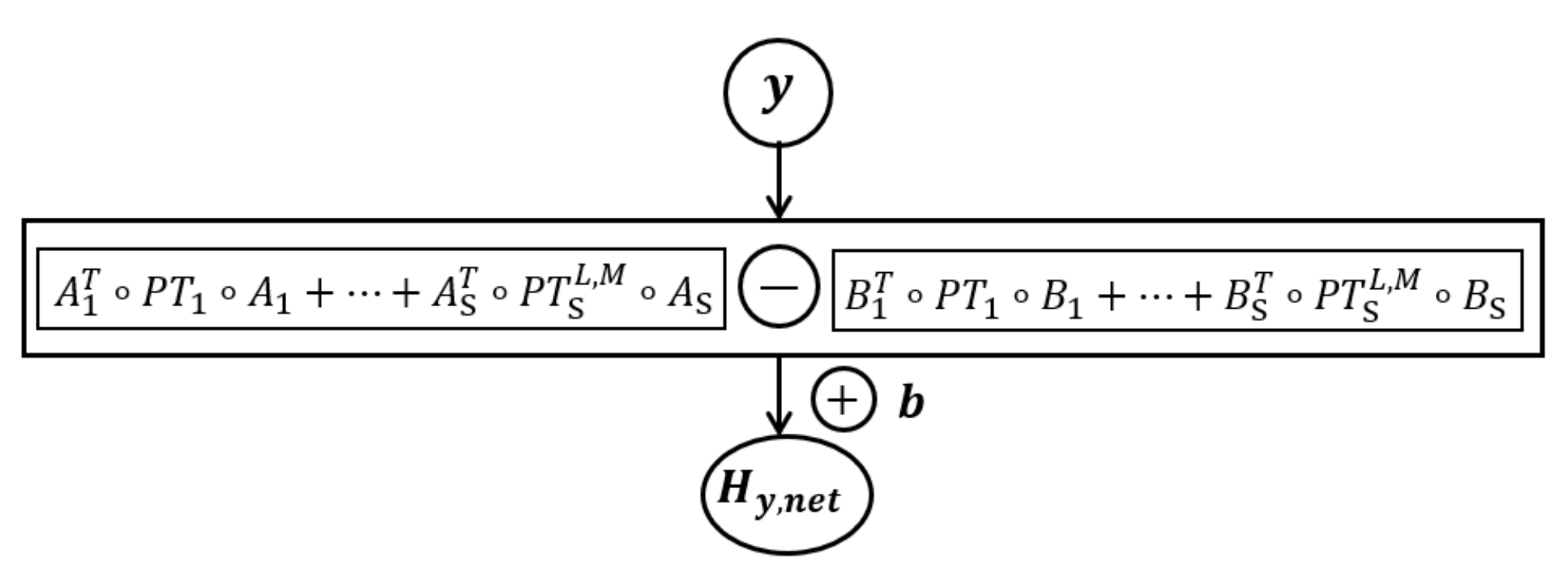}
\caption{Schematic diagram of $H_{\boldsymbol{y},net}$}\label{Hynet}
\end{figure}
The difference between our network simulation of the gradient and that of the Taylor-nets mainly lies in two aspects. One is the detachment from the separable assumption of $H$ which allows unified parameterization of the entire gradient of $H$ by $H_{\boldsymbol{y},net}$ rather than by two nets ($T_{\boldsymbol{p}}(\boldsymbol{p},\theta_{\boldsymbol{p}})$ and $U_{\boldsymbol{q}}(\boldsymbol{q},\theta_{\boldsymbol{q}})$) separately, which enlarges the scope of application to even nonseparable Hamiltonian systems.
The other is the use of the Pad\'e-type activation functions instead of the Taylor activation functions. Being learnable, the Pad\'e-type activations are shown by numerical experiments in section \ref{tests} to outperform the Taylor activation functions in learning accuracy even with less network parameters and hidden neurons.

Meanwhile, the empirical analysis shows that, compared to the conventional activation functions such as the ReLU, the Pad\'e-type activation functions allow for the use of sparse training data and fewer training iterations. In contrast to PAU where the denominator is equipped with an absolute value, our Pad\'e-type activation functions avoid zero denominators by prescribing a polynomial denominator $d_M(x)$ without real roots, which simplifies the training process with respect to the backpropagation calculations and parameters size. Moreover, the Pad\'e-type activation functions remain a more natural form of Pad\'e approximants which may imply better inheritance of their high performance in approximation, where the choice of $d_M(x)$ is also flexible to be adaptive to specific problems at hand.

In summary, combining the entire-gradient architecture with the application of learnable Pad\'e-type activation functions, our $H_{\boldsymbol{y},net}$ embedded into the highly structure-preserving PSNN can learn general (nonseparable) Hamiltonian functions with significantly improved precision and robustness, even with sparser data. 

\subsection{The pseudo-symplectic network structure}\label{psnnstr}
As described in section \ref{sec31}, we need an explicit pseudo-symplectic integrator $\Phi$ in our PSNN, which we choose as the pseudo-symplectic Runge-Kutta method given by \eqref{RK} with parameters listed in Table \ref{tab:my_label0}. This method is of convergence order $4$ and pseudo-symplectic order $9$. It will be iteratively applied $K$ times $(k\ge1)$ in the PSNN, so we need to show that $\Phi^K$ is still pseudo-symplectic.
\begin{lemma}\label{lemma1} Let $\|\cdot\|$ be any matrix norm, and $\Phi(h,\boldsymbol{y})$ be a one-step pseudo-symplectic numerical integrator for Hamiltonian system \eqref{2} with pseudo-symplectic order $s$ ($s\ge0$) and time-step $h$ satisfying  $h<1$. Then for any $\boldsymbol{y}$, $\left\|\frac{\partial\Phi(h,\boldsymbol{y})}{\partial \boldsymbol{y}}\right\|$ is bounded.

 \end{lemma}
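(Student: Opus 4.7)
My plan is to exploit the one-step structure of the integrator together with the smoothness of the underlying Hamiltonian, treating the pseudo-symplectic hypothesis largely as a consistency check. The starting point is that any one-step method admits the representation $\Phi(h, \boldsymbol{y}) = \boldsymbol{y} + h\,\phi(h, \boldsymbol{y})$ for some increment function $\phi$, so that $\Phi(0,\boldsymbol{y}) = \boldsymbol{y}$ and $P(0,\boldsymbol{y}) = I$. Differentiating this in $\boldsymbol{y}$ yields the fundamental expression
\begin{equation*}
P(h, \boldsymbol{y}) := \frac{\partial \Phi(h,\boldsymbol{y})}{\partial \boldsymbol{y}} = I + h\,\frac{\partial \phi(h,\boldsymbol{y})}{\partial \boldsymbol{y}},
\end{equation*}
which already encodes all the information needed.

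From here the argument proceeds in two short steps. First, fix $\boldsymbol{y}$. Because $H$ is smooth (as stipulated in the preliminaries), the vector field $J^{-1}\nabla H$ is $C^{\infty}$, and hence the increment function $\phi$ of a reasonable one-step scheme is jointly continuous in $(h, \boldsymbol{y})$. Consequently, the continuous map $h \mapsto \|\partial \phi / \partial \boldsymbol{y}(h, \boldsymbol{y})\|$ attains a finite supremum $M(\boldsymbol{y})$ on the compact interval $[0,1]$. Second, the triangle inequality and submultiplicativity give $\|P(h,\boldsymbol{y})\| \leq 1 + h\,M(\boldsymbol{y}) < 1 + M(\boldsymbol{y})$ for every $h < 1$, which is the claimed bound.

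The main obstacle I expect is recognizing that the pseudo-symplectic hypothesis $P^T J P = J + \mathcal{O}(h^{s+1})$ cannot on its own control $\|P\|$: an exactly symplectic matrix such as $\mathrm{diag}(t,1/t)$ has unit determinant and preserves $J$ precisely, yet its norm is unbounded in $t$. Thus the proof must genuinely lean on the one-step structure and on the stipulation $h < 1$, not on pseudo-symplecticity per se. What the pseudo-symplectic condition does provide, essentially for free, is the auxiliary identity $|\det P(h,\boldsymbol{y})|^{2} = \det(J + \mathcal{O}(h^{s+1}))/\det J = 1 + \mathcal{O}(h^{s+1})$, confirming that $P$ is close to volume-preserving; this serves as a sanity check rather than as a step in bounding the norm.
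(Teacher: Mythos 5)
Your proof is correct, but it takes a genuinely different route from the paper's. The paper works entirely from the pseudo-symplectic identity: writing $A=\frac{\partial\Phi}{\partial\boldsymbol{y}}$, it bounds $\|A^TJA\|_2\le 1+\mathcal{O}(h^{s+1})$, shows $J+\mathcal{O}(h^{s+1})$ is invertible for $h<1$ so that $A$ is invertible, and then estimates $\|A\|_2=\|J^{-1}(A^T)^{-1}A^TJA\|_2\le\|(A^T)^{-1}\|_2\,\|A^TJA\|_2$, with $\|(A^T)^{-1}\|_2^2=1/\lambda_{\min}(A^TA)$ finite because $A$ is nonsingular. Your observation that pseudo-symplecticity alone cannot control $\|A\|$ is well taken and exposes the weak link in that chain: the example $\mathrm{diag}(t,1/t)$ satisfies $A^TJA=J$ exactly yet has arbitrarily large norm, and correspondingly the paper's step $1/\lambda_{\min}<\infty$ delivers only pointwise finiteness (trivially true for any differentiable map), not a bound that is uniform as $h\to 0$ --- which is what Theorem \ref{thmpseu} actually consumes when it absorbs $\left\|\frac{\partial\Phi^l}{\partial\boldsymbol{y}}\right\|_2^2\mathcal{O}(h^{s+1})$ into $\mathcal{O}(h^{s+1})$. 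Your argument via the increment representation $\Phi(h,\boldsymbol{y})=\boldsymbol{y}+h\,\phi(h,\boldsymbol{y})$ and compactness of $[0,1]$ in $h$ yields $\|P(h,\boldsymbol{y})\|\le 1+M(\boldsymbol{y})$ uniformly in $h<1$, which is the estimate the downstream theorem really needs. The price is an extra (unstated in the lemma, but satisfied here) hypothesis: that the increment function has a Jacobian jointly continuous in $(h,\boldsymbol{y})$, which holds for the explicit Runge--Kutta scheme \eqref{RK} with a smooth $H$ or a smooth $H_{\boldsymbol{y},net}$. In short, you prove a slightly stronger and more useful statement under a mild structural assumption, whereas the paper proves a weaker statement from the pseudo-symplectic hypothesis alone.
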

\begin{proof}
Since all matrix norms are equivalent in finite dimensional matrix spaces, we only need to show that the result holds for matrix 2-norm.

Denote $A=\frac{\partial\Phi(h,\boldsymbol{y})}{\partial \boldsymbol{y}}$. According to the definition of pseudo-symplectic integrator \eqref{pseud0}, we have
\begin{align*}
\|A^T J A\|_2\le \|J\|_2+O(h^{s+1})=1+O(h^{s+1})<\infty,
\end{align*}
which then implies that $\|A^T J A\|_{m_{\infty}}$ is bounded such that all elements in $A^T J A$ are finite. Consequently, $\det (A^T J A)$ is finite indicating that $\det (A)<\infty$. On the other hand,
\begin{align*}
J+O(h^{s+1})=J(I-J\cdot O(h^{s+1})),
\end{align*}
where $$\|J\cdot O(h^{s+1})\|_2=\|J\|_2O(h^{s+1})=O(h^{s+1})<1$$
owing to $h<1$. Thus $J+O(h^{s+1})$ is invertible, meaning that $A$ is invertible. Consequently, $A^TA$ is an $2d\times 2d$- dimensional positive definite matrix with real positive eigen-values
$$\lambda_1\ge\lambda_2\ge\cdots\ge\lambda_{2d}>0.$$
Since
$$\lambda_1\cdot\lambda_2\cdot\dots\cdot\lambda_{2d}=\det(A^TA)=(\det(A))^2<\infty,$$
it holds that $0<\lambda_{2d}<\infty$. Then we have
$$0<\|(A^T)^{-1}\|_2^2=\lambda_{max}\left((A^TA)^{-1}\right)=\frac{1}{\lambda_{2d}}<\infty.$$
Therefore, 
\begin{align*}
\|A\|_2=\|J^{-1}(A^T)^{-1}A^T J A\|_2\le \|(A^T)^{-1}\|_2\cdot \|A^T J A\|_2<\infty.
\end{align*}
\end{proof}

\begin{theorem}\label{thmpseu} 
Let $\Phi(h,\boldsymbol{y})$ be a pseudo-symplectic integrator with pseudo-symplectic order $s$ ($s\ge0$) and time step $h$ $(h<1)$ for Hamiltonian system \eqref{2}. Then for any $K\in \mathbb{Z}_{+}$, $K$ times composition of $\Phi$, namely $\Phi^K$ is still pseudo-symplectic with pseudo-symplectic order $s$.
\end{theorem}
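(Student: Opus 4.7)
My plan is to prove the theorem by induction on $K$, using the chain rule to decompose the Jacobian of $\Phi^K$ and then applying Lemma \ref{lemma1} to control the error terms that arise from the composition.

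The base case $K=1$ is immediate from the assumption that $\Phi$ is pseudo-symplectic of order $s$. For the inductive step, I would assume $\Phi^{K-1}$ is pseudo-symplectic of order $s$ and write $\Phi^K = \Phi \circ \Phi^{K-1}$. Letting $\boldsymbol{z} = \Phi^{K-1}(h,\boldsymbol{y})$, the chain rule gives
\begin{align*}
\frac{\partial \Phi^K(h,\boldsymbol{y})}{\partial \boldsymbol{y}} = \frac{\partial \Phi(h,\boldsymbol{z})}{\partial \boldsymbol{z}} \cdot \frac{\partial \Phi^{K-1}(h,\boldsymbol{y})}{\partial \boldsymbol{y}}.
\end{align*}
Denoting $A = \partial \Phi(h,\boldsymbol{z})/\partial \boldsymbol{z}$ and $B = \partial \Phi^{K-1}(h,\boldsymbol{y})/\partial \boldsymbol{y}$, I then compute
\begin{align*}
(AB)^T J (AB) = B^T (A^T J A) B = B^T \bigl(J + \mathcal{O}(h^{s+1})\bigr) B = B^T J B + B^T \mathcal{O}(h^{s+1}) B,
\end{align*}
where the middle equality uses the pseudo-symplecticity of $\Phi$. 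The inductive hypothesis gives $B^T J B = J + \mathcal{O}(h^{s+1})$.

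The crucial step is to show the residual term $B^T \mathcal{O}(h^{s+1}) B$ is itself $\mathcal{O}(h^{s+1})$, which reduces to showing $\|B\|$ is bounded by a constant independent of $h$. Here I would apply the chain rule once more to write $B$ as a product of $K-1$ Jacobians of $\Phi$ evaluated at intermediate points, and invoke Lemma \ref{lemma1} on each factor to conclude that $\|B\|$ is bounded by a $K$-dependent but $h$-independent constant. Combining everything yields
\begin{align*}
\frac{\partial \Phi^K}{\partial \boldsymbol{y}}^T J \frac{\partial \Phi^K}{\partial \boldsymbol{y}} = J + \mathcal{O}(h^{s+1}),
\end{align*}
completing the induction.

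The main obstacle I anticipate is the proper bookkeeping of the implicit constants in the $\mathcal{O}(h^{s+1})$ terms through the induction: at each step the constant may grow by a factor related to $\|B\|^2$, so I need to make explicit that $K$ is a fixed positive integer independent of $h$, in which case the accumulated constant remains finite. Lemma \ref{lemma1} is exactly what guarantees this and is the key reason the argument goes through cleanly.
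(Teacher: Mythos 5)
Your proposal is correct and follows essentially the same route as the paper: induction on $K$, the chain-rule factorization of the Jacobian, pseudo-symplecticity of $\Phi$ for the outer factor and the inductive hypothesis for the inner one, with Lemma \ref{lemma1} controlling the residual term $B^T\mathcal{O}(h^{s+1})B$. The only cosmetic difference is that the paper invokes Lemma \ref{lemma1} once, applied directly to $\Phi^{K-1}$ (which is pseudo-symplectic by the inductive hypothesis), rather than factoring $B$ into $K-1$ individual Jacobians as you suggest; both are valid.
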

\begin{proof}
For brevity we omit writing the time step $h$ inside the pseudo-symplectic integrators in the following, e.g. $\Phi(\boldsymbol{y}):=\Phi(h,\boldsymbol{y})$. We prove the result by mathematical induction on $K$. The assertion holds naturally for $K=1$. Assume that it holds for $K\le l$ $(l\ge1)$, then for $K=l+1$ we have
\begin{align*}
\frac{\partial \Phi^{l+1}(\boldsymbol{y})}{\partial \boldsymbol{y}}=\frac{\partial \Phi (\Phi^l(\boldsymbol{y}))}{\partial \Phi^l (\boldsymbol{y})}\frac{\partial \Phi^l(\boldsymbol{y})}{\partial \boldsymbol{y}}.
\end{align*}
Thus
\begin{align}\label{before}
\frac{\partial \Phi^{l+1}(\boldsymbol{y})}{\partial \boldsymbol{y}}^T J \frac{\partial \Phi^{l+1}(\boldsymbol{y})}{\partial \boldsymbol{y}}=\frac{\partial \Phi^l(\boldsymbol{y})}{\partial \boldsymbol{y}}^T \frac{\partial \Phi (\Phi^l(\boldsymbol{y}))}{\partial \Phi^l(\boldsymbol{y})}^T J \frac{\partial \Phi (\Phi^l(\boldsymbol{y}))}{\partial \Phi^l (\boldsymbol{y})}\frac{\partial \Phi^l(\boldsymbol{y})}{\partial \boldsymbol{y}}.
\end{align}
Since $\Phi$ is pseudo-symplectic, we have
\begin{align}\label{middle}
\frac{\partial \Phi (\Phi^l(\boldsymbol{y}))}{\partial \Phi^l(\boldsymbol{y})}^T J \frac{\partial \Phi (\Phi^l(\boldsymbol{y}))}{\partial \Phi^l (\boldsymbol{y})}=J+O(h^{s+1}).
\end{align}
Substituting \eqref{middle} into \eqref{before} we obtain
\begin{align*}
\frac{\partial \Phi^{l+1}(\boldsymbol{y})}{\partial \boldsymbol{y}}^T J \frac{\partial \Phi^{l+1}(\boldsymbol{y})}{\partial \boldsymbol{y}}&=\frac{\partial \Phi^l(\boldsymbol{y})}{\partial \boldsymbol{y}}^T(J+O(h^{s+1})) \frac{\partial \Phi^l(\boldsymbol{y})}{\partial \boldsymbol{y}}=J+O(h^{s+1}),
\end{align*}
where the last equality holds because $\Phi^l$ is pseudo-symplectic by the assumption of induction, and $$\left\|\frac{\partial \Phi^l(\boldsymbol{y})}{\partial \boldsymbol{y}}^T(O(h^{s+1})) \frac{\partial \Phi^l(\boldsymbol{y})}{\partial \boldsymbol{y}}\right\|_2\le \left\|\frac{\partial \Phi^l(\boldsymbol{y})}{\partial \boldsymbol{y}}\right\|_2^2 O(h^{s+1})=O(h^{s+1})$$ by Lemma \ref{lemma1} which ensures that $\left\|\frac{\partial \Phi^l(\boldsymbol{y})}{\partial \boldsymbol{y}}\right\|_2^2$ is bounded.
This ends the proof.

\end{proof}
As illustrated by the schematic diagram in Figure \ref{psnn}, the PSNN generates a pseudo-symplectic transmission from the input to the output, according to the above Theorem \ref{thmpseu}. In contrast to the symplectic Taylor-nets for separable Hamiltonian systems, the pseudo-symplectic PSNN can be applied to general nonseparable Hamiltonian systems with close preservation of the symplectic structure that enables accurate long-term prediction, as is shown by numerical experiments in section \ref{tests}.
\begin{figure}[h]
\centering
\includegraphics[width=\linewidth]{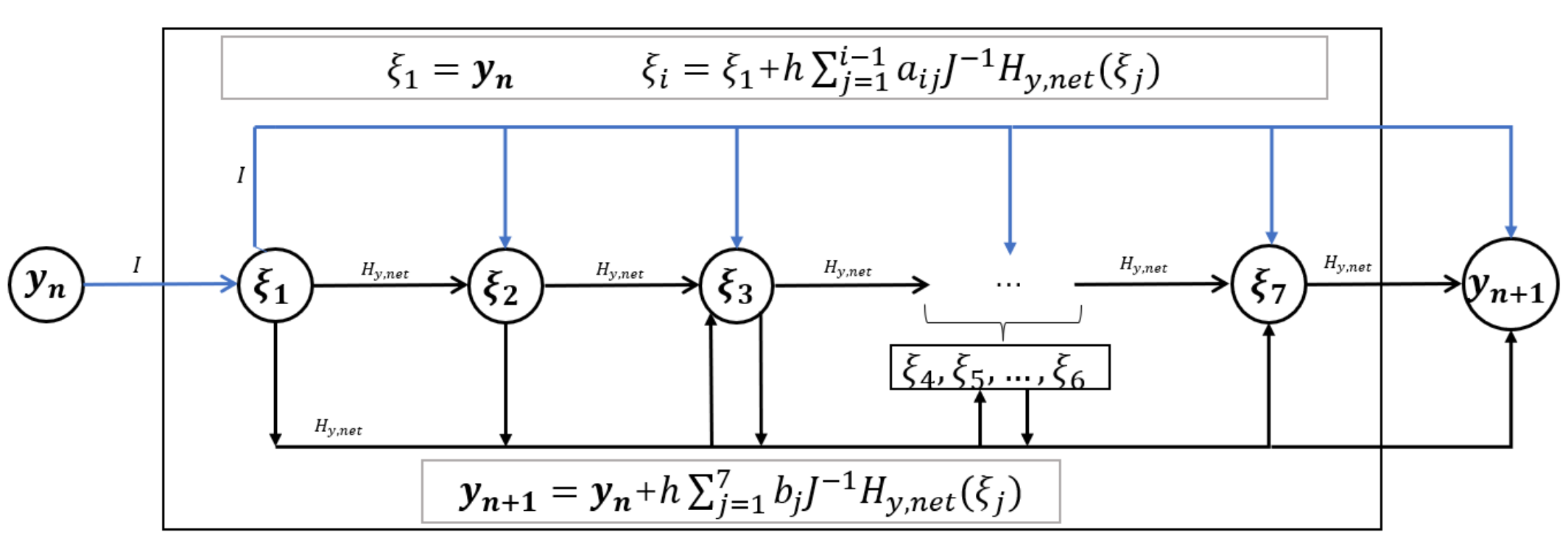}
\caption{Schematic diagram of PSNN, where $I$ represents the identity mapping.}\label{psnn}
\end{figure}
The overall algorithm of learning the gradient of Hamiltonian functions for Hamiltonian systems via the pseudo-symplectic neural network is summarized in the table of Algorithm \ref{alg1}.

\IncMargin{-1em}
\begin{algorithm}[H]
\SetKwInOut{Input}{Parameters}
\caption{Process of learning Hamiltonian systems via PSNN}\label{alg1}
\LinesNumbered 
\KwIn{Training data $\mathcal{D}_{train}=\{(\boldsymbol{y}_0^{(i)},\boldsymbol{y}_1^{(i)})_{train}|i=1,\cdots,N_{train}\}$, testing data $\mathcal{D}_{test}=\{(\boldsymbol{y}_0^{(i)},\boldsymbol{y}_1^{(i)})_{test}|i=1,\cdots,N_{test}\}$, time interval $T$ between $\boldsymbol{y}_0^{(i)}$ and $\boldsymbol{y}_1^{(i)}$, pseudo-symplectic integrator $\Phi$ with time-step $h$, number of Pad\'e-type activation units $S$, degree of Pad\'e-type activation functions $(L,M)$}
\Input{ parameters $\theta$ of $H_{\boldsymbol{y},net}(\cdot,\theta)$, learning rate $l_r$, number of training epochs $n_{epoch}$}

\KwOut{$H_{\boldsymbol{y},net}$}
\BlankLine
\For{$j=1:n_{epoch}$}{
\For{$i=1:N_{train}$}{
$\hat{y}_1^{(i)}=\Phi^{T/h}(\boldsymbol{y}_0^{(i)},h,H_{\boldsymbol{y},net}(\cdot,\theta))$\;}
$Loss_{PSNN} = \frac{1}{N_{train}}\sum_{i=1}^{N_{train}}\|\hat{\boldsymbol{y}}_1^{(i)} - \boldsymbol{y}_1^{(i)}\|_2^2 $\;
optimize $Loss_{PSNN}$ by $Adam$ and update $\theta$\;
}
\end{algorithm}
\DecMargin{-1em}

\section{Numerical experiments}\label{tests}
The first three examples apply the PSNN with Pad\'e-type activation function to learning non-separable Hamiltonian systems, where the Taylor-nets are not applicable. Therefore we compare it with the classical HNN and ODE-net (\cite{h-1}) in these examples. Meanwhile, PSNN using Pad\'e-type activation function is compared with PSNN using Taylor, PAU and ReLU activation functions, which refer to replacing the Pad\'e-type activation function in $H_{\boldsymbol{y},net}$ \eqref{hynetform} by Taylor, PAU and ReLU, respectively, i.e.:
\begin{align}
H^{Taylor}_{\boldsymbol{y},net}(\boldsymbol{y},\theta)=\sum_{i=1}^S A_i^T\circ f_i \circ A_i\circ \boldsymbol{y}-\sum_{i=1}^S B_i^T\circ f_i\circ B_i\circ \boldsymbol{y}+b,
\end{align}
where $f_i(x)=\frac{1}{i!}x^i$,
\begin{align}
H^{ReLU}_{\boldsymbol{y},net}(\boldsymbol{y},\theta)=\sum_{i=1}^S A_i^T\circ ReLU \circ A_i\circ \boldsymbol{y}-\sum_{i=1}^S B_i^T\circ ReLU\circ B_i\circ \boldsymbol{y}+b,
\end{align}
and 
\begin{align}
H^{PAU}_{\boldsymbol{y},net}(\boldsymbol{y},\theta)=\sum_{i=1}^S A_i^T\circ PAU_i \circ A_i\circ \boldsymbol{y}-\sum_{i=1}^S B_i^T\circ PAU_i\circ B_i\circ \boldsymbol{y}+b,
\end{align}
where $PAU_i$ represents the PAU activation function $\frac{\sum_{j=0}^m a_{ij} x^j}{1+\left|\sum_{k=1}^n b_{ik} x^k\right|}$, with $a_{ij}$, $b_{ik}$ being learnable parameters.

The fourth example of this section considers a separable Hamiltonian system where both the PSNN with Pad\'e-type activation function  and the Taylor-nets are applicable. There we compare the learning behavior of the two neural networks.

In all the experiments, we use 'Adam' as the optimization algorithm.
For the Pad\'e-type activation function, we select $d_M(x)=2+2x+x^2$ which has no roots in $\mathbb{R}$ to be the denominator. That is $M=2$. According to the Pad\'e approximation theory, the approximation is most effective when the degrees of the numerator and the denominator are equal or differ by $1$. Therefore, since we need $L\neq M$, we choose $L=3$ in our experiments. The number $S$ of Pad\'e-type summands in $H_{\boldsymbol{y},net}$ \eqref{hynetform} is set to $S=4$ based on our empirical analysis, but for the higher-dimensional problem in Example 3 we let $S=6$. The learnable parameters $c_{ij}$ in the Pad\'e-type activation functions \eqref{pti} are initialized as $c_{ij}=0,\,\,(i=1,\cdots,S, j=0,\cdots,L)$ in the training. As in \cite{h-6}, the network parameters $A_i$,$B_i$ and $b$ in (\ref{hynetform}) are initialized as
$A_i, B_i \sim \mathcal{N}\left(0, \sqrt{2 /2ld}\right)$ and $b\sim \mathcal{N}\left(0, 1\right)$, where $2d$ is the dimension of the Hamiltonian system and $l$ is the width of the hidden layer. 



\subsection{Example 1: Movement of a bead on a wire} In this example we consider learning a Hamiltonian system which describes the movement of a bead on a wire \cite{Arnol'd}, where the Hamiltonian function is
\begin{align}\label{h-1}
H(p,q)=\frac{p^2}{2(1+U^{'}(q)^2)}+U(q),
\end{align}
with $p,q$ being scalar and $U(q)=0.1q(q-1)$. 

For the training data $\mathcal{D}_{train}=\{(\boldsymbol{y}_0^{(i)},\boldsymbol{y}_1^{(i)})_{train}|i=1,\cdots,N_{train}\}$ we sample the initials $\{\boldsymbol{y}_0^{(i)}=(p_0^{(i)},q_0^{(i)})|i=1,\cdots,N_{train}\}$ uniformly from the region $V=[-2,2]\times[-2,2]$, and generate the $\{\boldsymbol{y}_1^{(i)}=(p_1^{(i)},q_1^{(i)})|i=1,\cdots, N_{train}\}$ via the midpoint rule with time step $h=0.01$, thereby also the time interval between $\boldsymbol{y}_0^{(i)}$ and $\boldsymbol{y}_1^{(i)}$ is taken as $T=0.01$, and the iteration number $K$ of the pseudo-symplectic integrator $\Phi$ is $K=T/h=1$ . Testing data $\mathcal{D}_{test}=\{(\boldsymbol{y}_0^{(i)},\boldsymbol{y}_1^{(i)})_{test}|i=1,\cdots,N_{test}\}$ are collected in the same way but independent from that for $\mathcal{D}_{train}$. In the experiment we take $N_{train}=15$ and $N_{test}=100$. The width of the PSNN is set to $l=32$, and the number of training epochs is $n_{epoch}=1500$. 

\begin{figure}[ht]
\centering
\includegraphics[width=\linewidth]{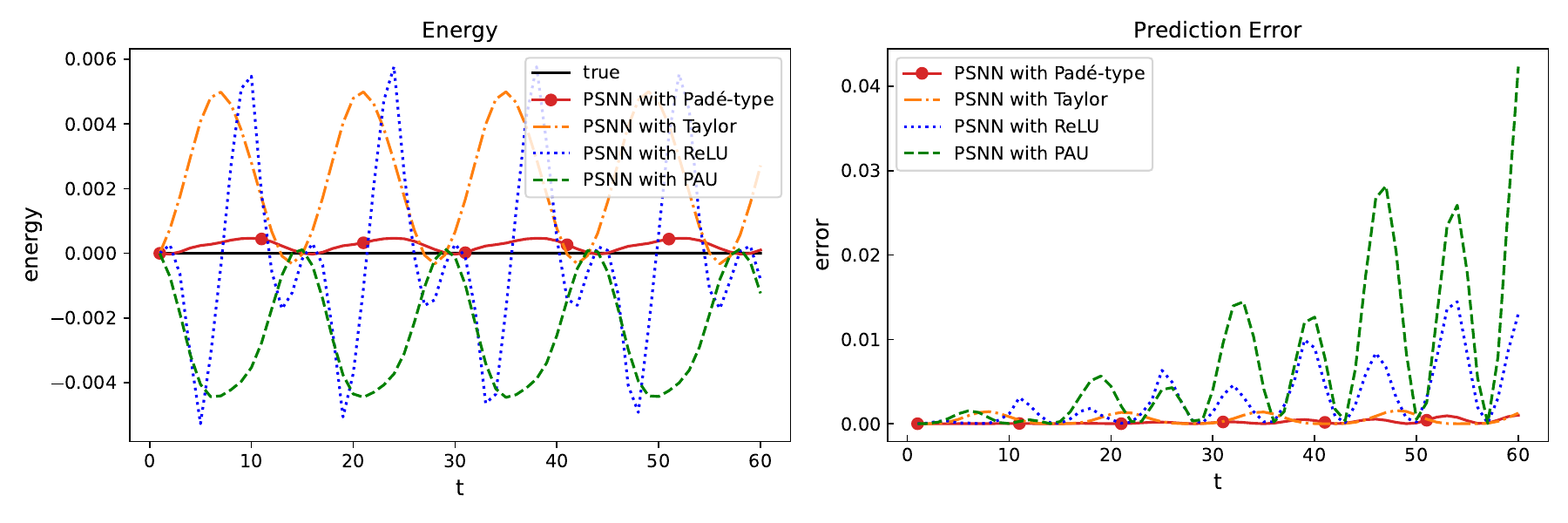}
\caption{Evolution of energy learned by PSNN with different activation functions (left); Prediction error by PSNN with different activation functions (right).}\label{2.3}
\end{figure}

It is well known that the Hamiltonian function $H(\boldsymbol{y}(t))$ of a Hamiltonian system is an invariant quantity along time evolution of the flow. It is often called the total energy of the system (\cite{Hairer}). There has been works attempting to create neural network learning of Hamiltonian systems that can preserve the energy, such as the HNN (\cite{h-2}). Figure \ref{2.3} (left panel) shows the evolution of $H(\boldsymbol{y}(t))$ \eqref{h-1} on $t\in[0,60]$ learned by the PSNN with four different activation functions, namely the Pad\'e-type, Taylor, PAU and ReLU. It is obvious that the Pad\'e-type activation function outperforms the other three significantly in energy preservation.

The right panel of Figure \ref{2.3} compares the prediction errors on $t\in[0,60]$ produced by the PSNN with different activation functions, which is defined as
\begin{align}\label{errorp-1}
error(t)=\|\hat{\boldsymbol{y}}(t)-\boldsymbol{y}(t)\|^2_2,
\end{align}
where $\boldsymbol{y}(t)$ represents the true solution of $\boldsymbol{y}$ at time $t$ which is simulated by the midpoint rule with tiny time step, and $\hat{\boldsymbol{y}}(t)$ denotes the approximation to $\boldsymbol{y}(t)$ via the PSNN. Obviously the Pad\'e-type activation function yields the least error, while the ReLU produces the largest error.
\begin{figure}[ht]
\centering
\includegraphics[width=\linewidth]{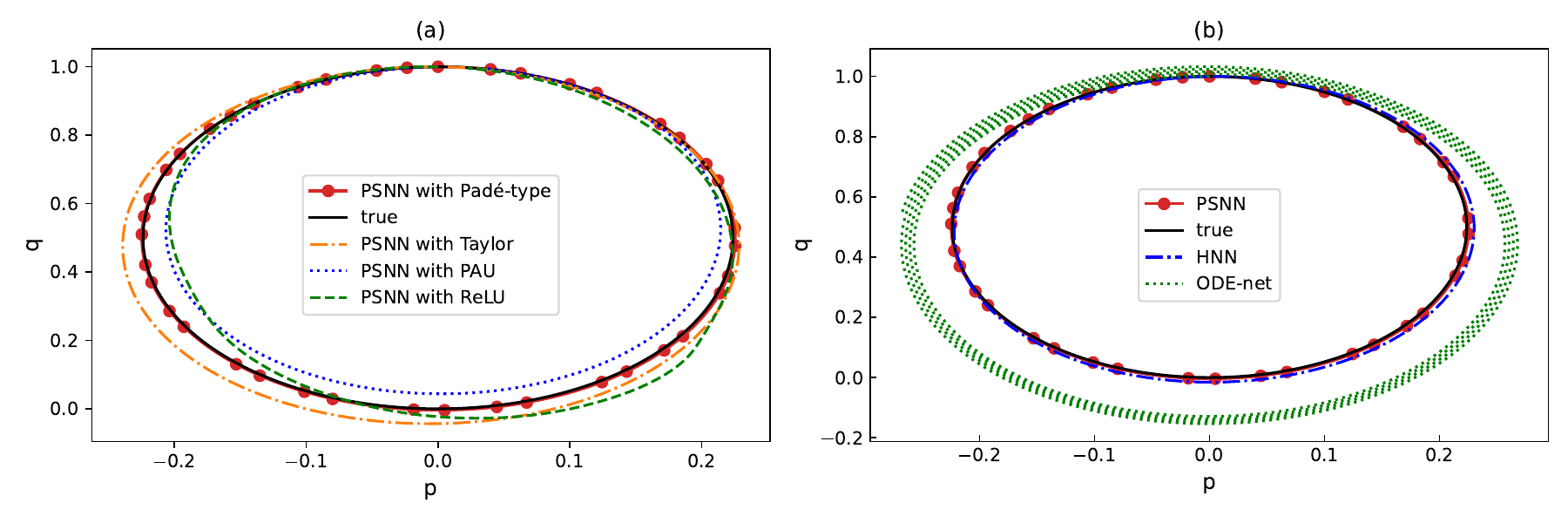}
\caption{(a) Predicted phase trajectory on $t\in[0,60]$ by PSNN with different activation functions; (b) Predicted phase trajectory on $t\in[0,60]$ by different neural networks.}\label{1.5}
\end{figure}

The left panel of Figure \ref{1.5} draws  predicted phase trajectories by using PSNN with the Pad\'e-type, Taylor, PAU and ReLU activation functions respectively, while the right panel illustrates predicted phase trajectories by PSNN with Pad\'e-type activation, HNN and ODE-net accordingly. In both panels the true solution is drawn as reference. Clearly, PSNN with Pad\'e-type activation creates the most accurate prediction, and the Pad\'e-type activation function performs the best among the four activation functions. Moreover, the training samples of HNN and ODE-net are 1000, which far exceeds the 15 training data required for PSNN with  Pad\'e-type activation function. It can be seen from the right panel that the structure-preserving neural networks, the PSNN and the HNN, exhibit good prediction behavior, while the non-structure-preserving ODE-net produces a certain thickness on the curve.

\subsection{Example 2: A modified pendulum problem}
Here we consider a 2-dimensional modified pendulum problem with Hamiltonian function (\cite{Hairer2})
\begin{align}\label{h-2}
H(p,q)=\frac{p^2}{2}-cos(q)(1-\frac{p}{6}).
\end{align}

The data and neural network setting are the same with those for Example 1. 

\begin{figure}[ht]
\centering
\includegraphics[width=\linewidth]{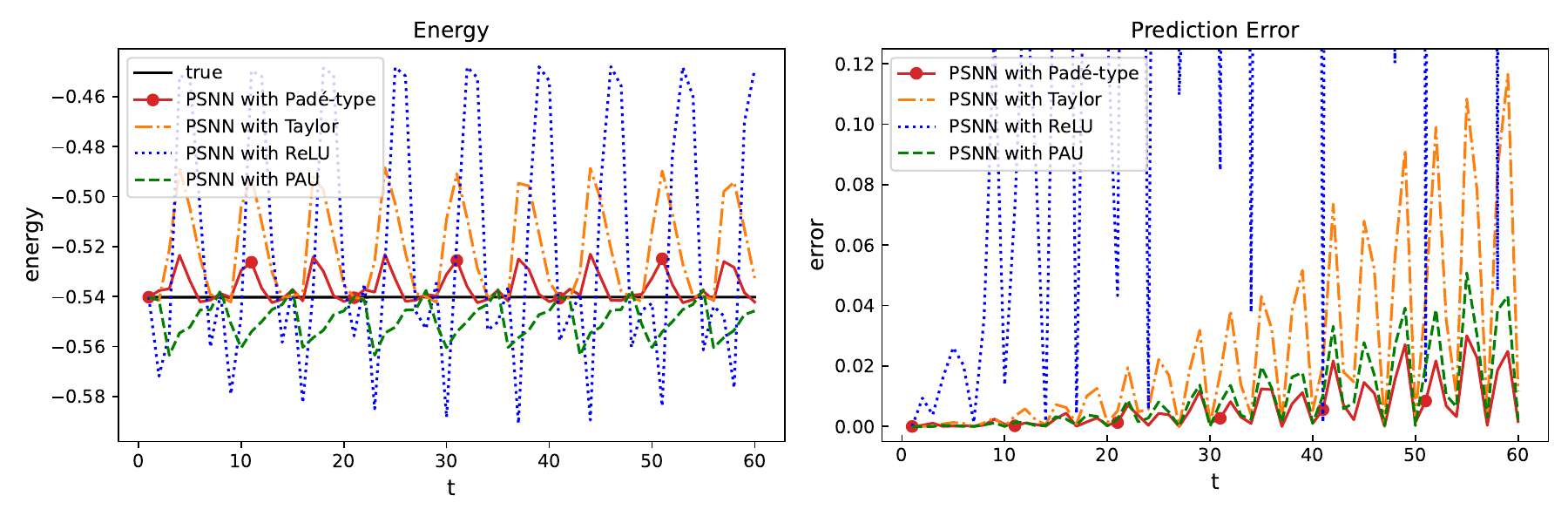}
\caption{Evolution of energy learned by PSNN with different activation functions (left); Prediction error by PSNN with different activation functions (right).}\label{1.1}
\end{figure}

The left panel of Figure \ref{1.1} shows evolution of the Hamiltonian function \eqref{h-2} on $t\in[0,60]$ learned by PSNN with different activation functions, where it is evident that the PSNN with Pad\'e-type activation function can better preserve the energy than the other three. The right panel compares the prediction error \eqref{errorp-1} on $t\in [0,60]$ produced by PSNN with different activation functions, where the Pad\'e-type activation function creates the most accurate prediction.

\begin{figure}[ht]
\centering
\includegraphics[width=\linewidth]{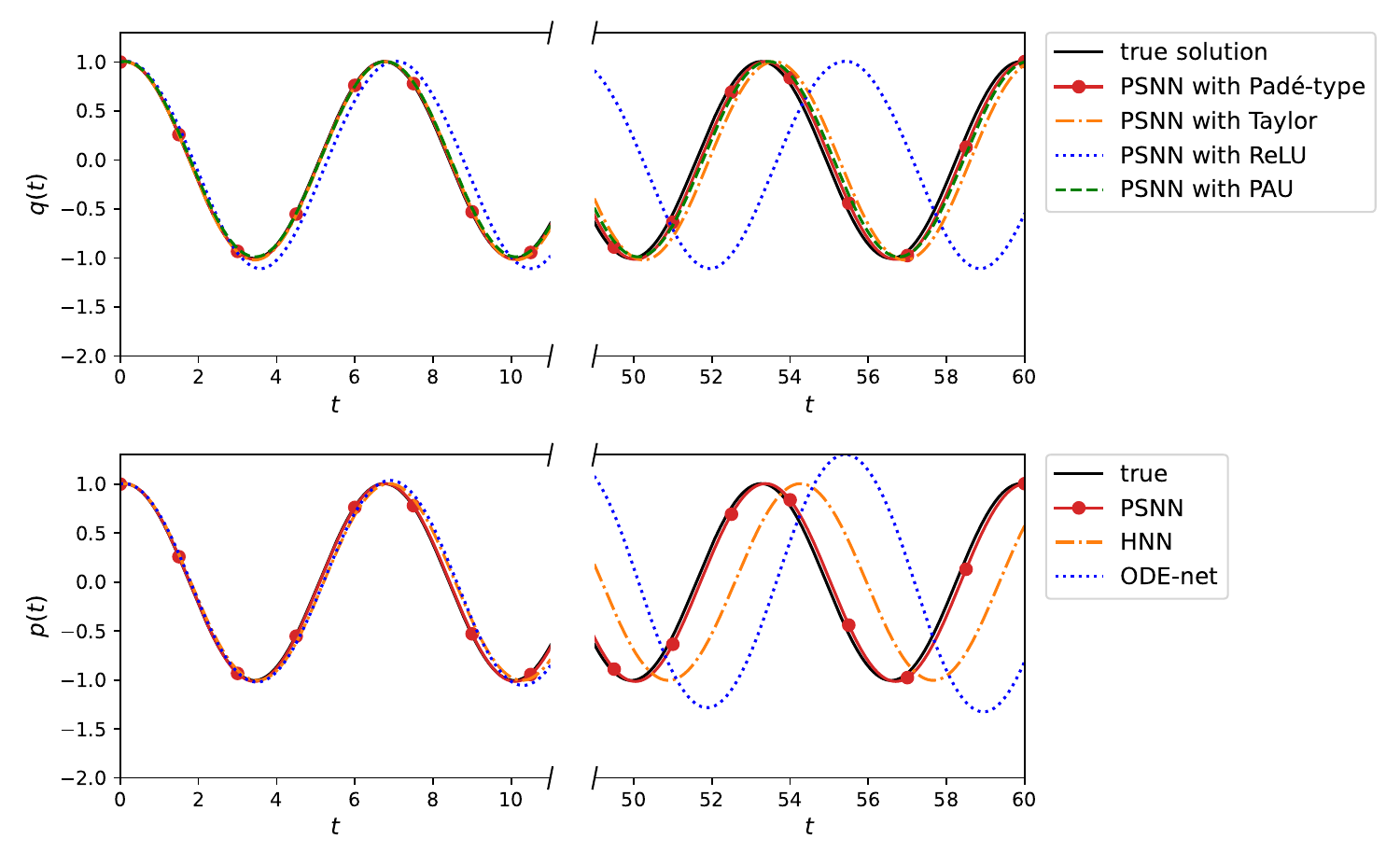}
\caption{Predicted $p(t)$ generated by PSNN with different activation functions (up); Predicted $p(t)$ generated by PSNN, HNN and ODE-net, where PSNN uses the Pad\'e-type activation function (low).}\label{2.5}
\end{figure}
In Figure \ref{2.5} we compare the prediction of $p(t)$ generated by our model and by others, where the upper panel contributes to the comparison of different activation functions, and the lower panel contrasts the prediction effect of applying PSNN with that by HNN and ODE-net. It can be seen that our PSNN model and the Pad\'e-type activation function provide the best long-term prediction among the comparators.
\begin{figure}[ht]
\centering
\includegraphics[width=\linewidth]{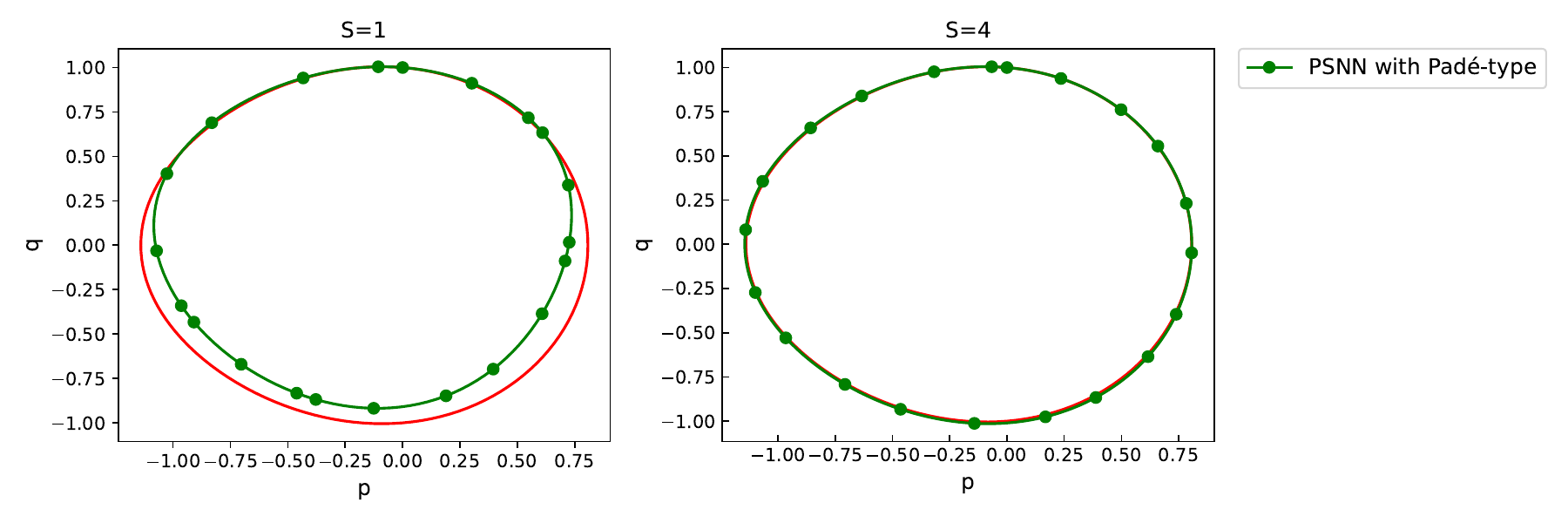}
\caption{Predicted phase trajectory on $t\in[0,60]$ by PSNN using Pad\'e-type activation function with $S=1$ (left); Predicted phase trajectory on $t\in[0,60]$ by PSNN using Pad\'e-type activation function with $S=4$ (right).}\label{2.1}
\end{figure}

Figure \ref{2.1} shows the impact 
of different number $S$ of Pad\'e-type summands in $H_{\boldsymbol{y},net}$ \eqref{hynetform} on the prediction ability of the PSNN. As can be seen from the figure, $S=4$ already suffices to produce fairly accurate prediction, in contrast to the Taylor-nets where $S=8$ is needed for a satisfactory prediction (\cite{h-6}).

\subsection{Example 3: A 4-dimensional galactic dynamic problem}
In this section we consider the learning of a 4-dimensional galactic dynamic problem (\cite{Hairer2}) which is described by a Hamiltonian system with Hamiltonian function
\begin{align}\label{hh-3}
H(p,q)=\frac{1}{2}(p_1^2+p_2^2)+(p_1q_2-p_2q_1)/2+ln(1+q_1^2+q_2^2).
\end{align}

Due to the higher dimensionality of the system, we opted for a larger amount of training and testing data. Denote $\boldsymbol{y}_j^{(i)}=(p_{1,j}^{(i)},p_{2,j}^{(i)},q_{1,j}^{(i)},q_{2,j}^{(i)})$ for $j=0,1$. Then the training dataset is $\mathcal{D}_{train}=\{(\boldsymbol{y}_0^{(i)},\boldsymbol{y}_1^{(i)})_{train}|i=1,\cdots,N_{train}\}$, where we set $N_{train}=1000$ and $\boldsymbol{y}_0^{(i)}$ are sampled uniformly from the region $V=[-2, 2] \times [-2, 2]\times [-2, 2]\times [-2, 2]$. The testing dataset is $\mathcal{D}_{test}=\{(\boldsymbol{y}_0^{(i)},\boldsymbol{y}_1^{(i)})_{test}|i=1,\cdots,N_{test}\}$ where $N_{test}=2000$ and $\boldsymbol{y}_0^{(i)}$ are sampled uniformly from $V$ and independently to that for $\mathcal{D}_{train}$. The $\boldsymbol{y}_{1}^{i}$ are generated by the midpoint rule with time step $h=0.01$ in both the training and testing data set. That is we take $T=0.01$ (the observation interval) and $K=T/h=1$ (iteration number of $\Phi$), similar to Example 1. As mentioned above we choose $S=6$ in the $H_{\boldsymbol{y},net}$ \eqref{hynetform} in order to better approximate the Hamiltonian function with higher-dimensional independent variable. Other parameters in the neural network settings are the same with those for Example 1.
\begin{figure}[ht]
\centering
\includegraphics[width=\linewidth]{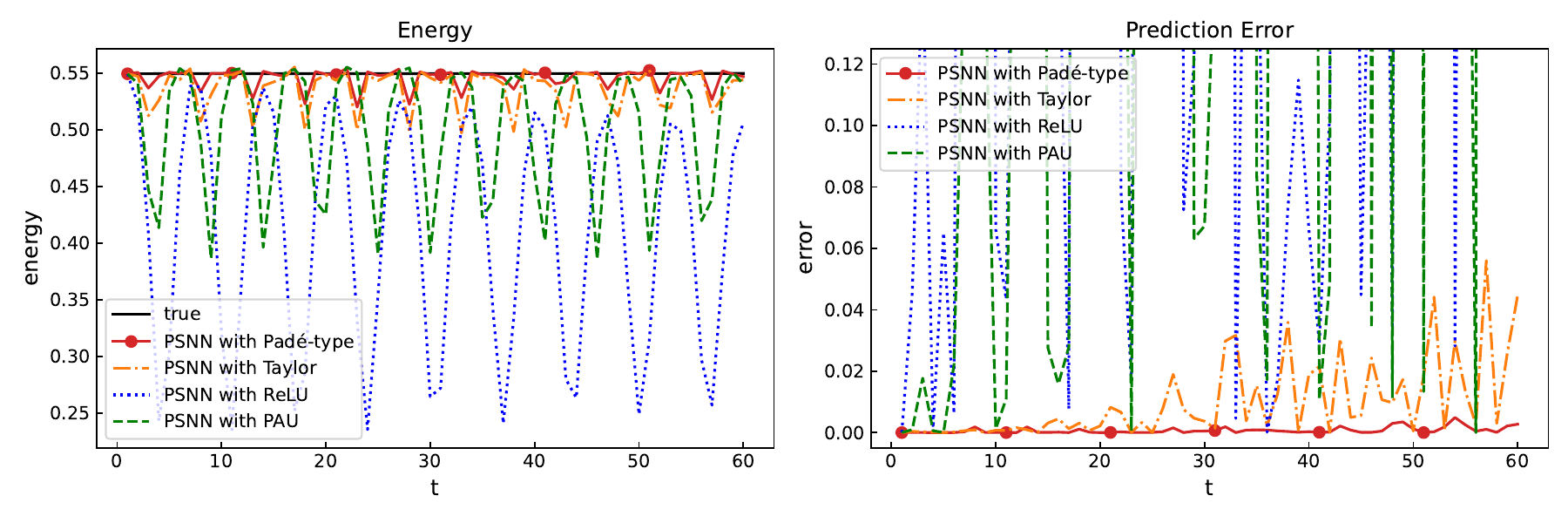}
\caption{Evolution of energy learned by PSNN with different activation functions (left); Prediction error by PSNN with different activation functions (right).}\label{3.3}
\end{figure}

The left panel of Figure \ref{3.3} shows the evolution of the Hamiltonian function \eqref{hh-3} on $t\in[0,60]$ learned by PSNN with different activation functions, where we can see that the Pad\'e-type activation function creates the best energy-preservation while the ReLU violates the energy conservation law mostly among the four. The right panel illustrates the prediction error \eqref{errorp-1} on $t\in[0,60]$ by applying PSNN with different activation functions. Again the Pad\'e-type activation function is significantly superior to the other three in prediction accuracy.

\begin{figure}[ht]
\centering
\includegraphics[width=\linewidth]{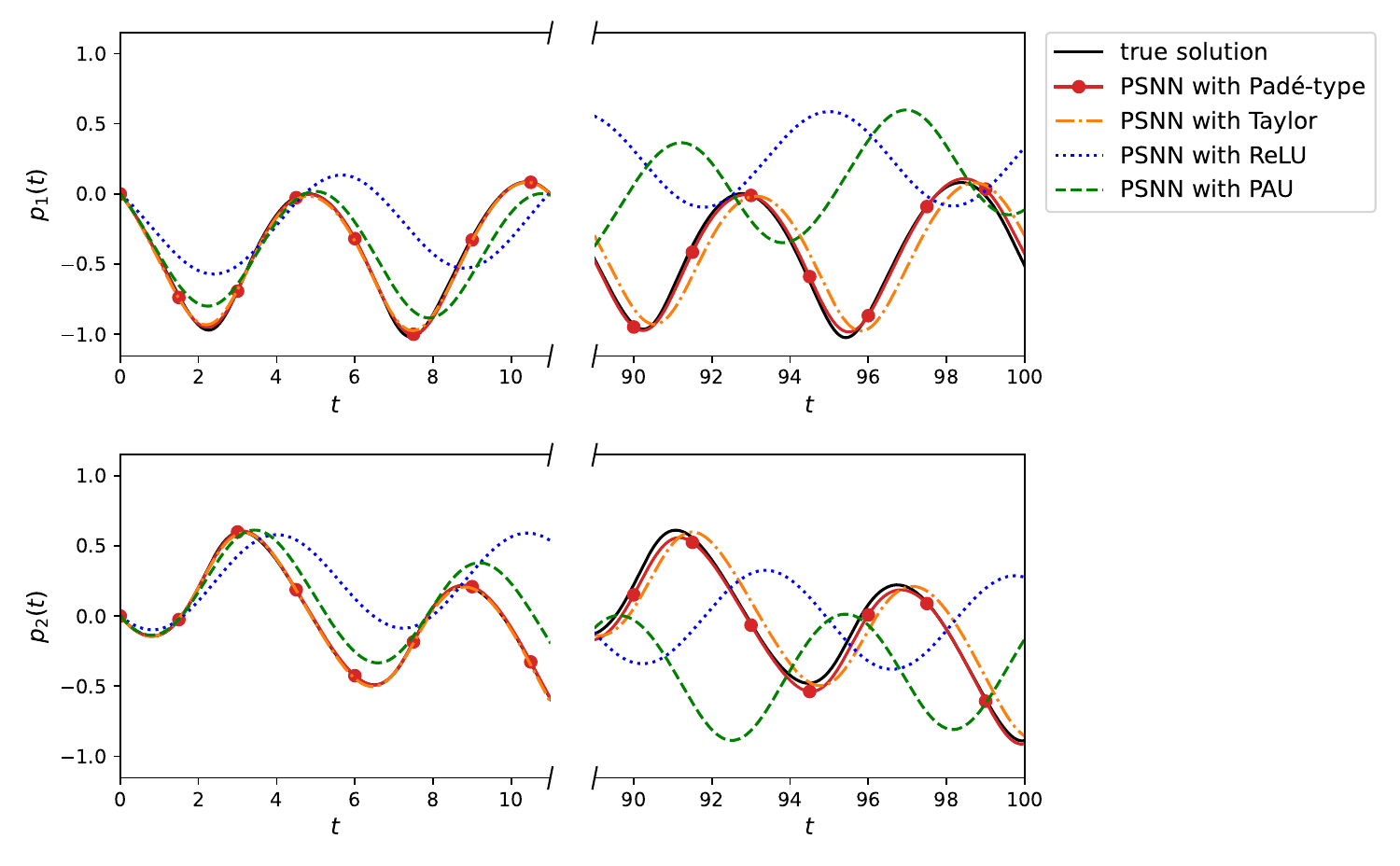}
\caption{Predicted $p_1(t)$ and $p_2(t)$ on $t\in[0,100]$ by PSNN with different activation functions.}\label{3.5}
\end{figure}
Figure \ref{3.5} contributes to compare the long-term prediction of $p_i(t)$ $(i=1,2)$ by using PSNN with different activation functions, where the Pad\'e-type activation function provides predicted solutions that almost coincide with the true solution even after a long time (10000 time steps) evolution. This  shows the high accuracy and long-term robustness of PSNN with Pad\'e-type activation function. 

\begin{figure}[ht]
\centering
\includegraphics[width=\linewidth]{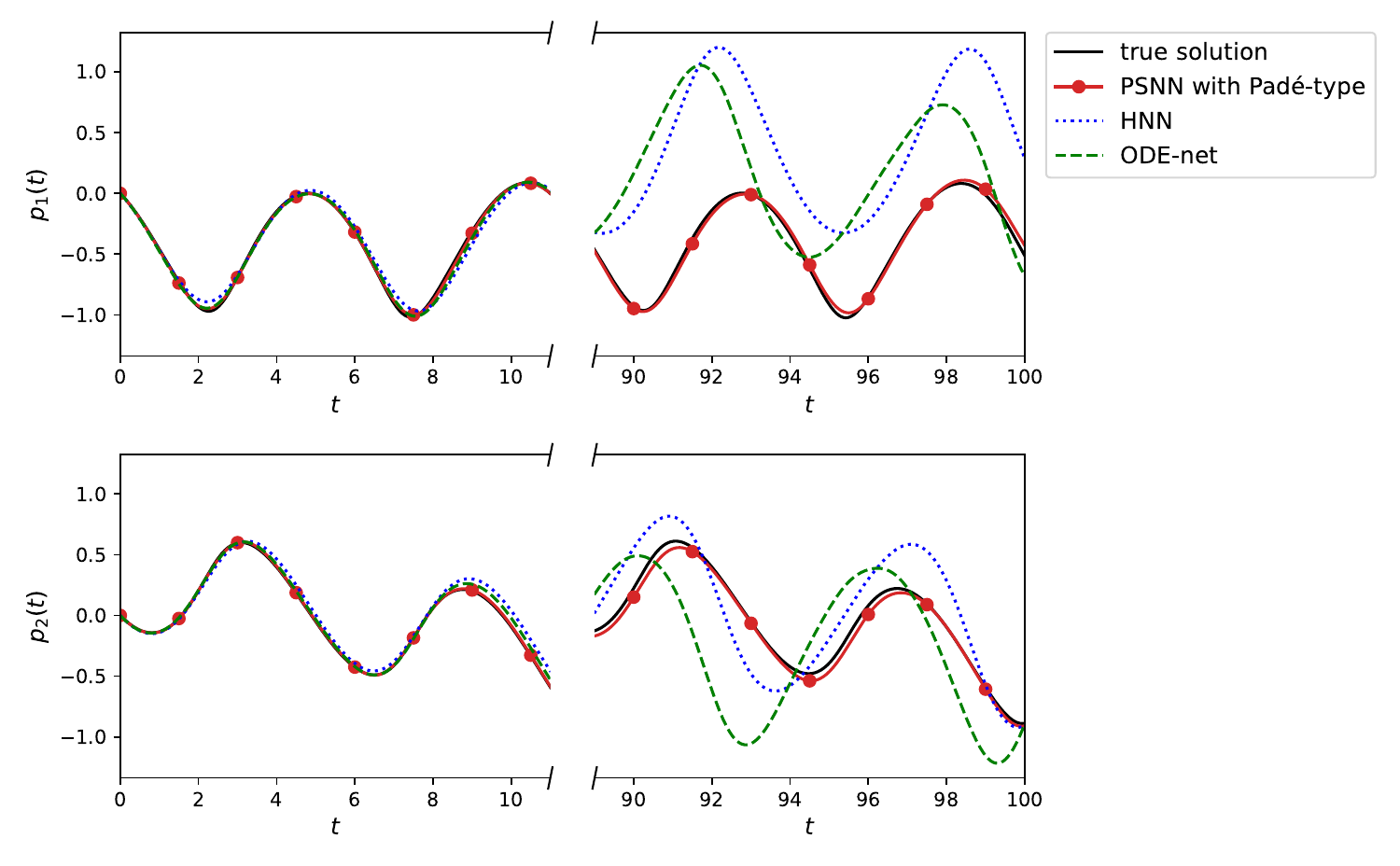}
\caption{Predicted $p_1(t)$ and $p_2(t)$ generated by PSNN, HNN and ODE-net, where PSNN uses the Pad\'e-type activation function.}\label{3.6}
\end{figure}
Figure \ref{3.6} presents a comparison between the PSNN and other models in predicting $p_i(t)$ ($i=1,2$) over the long time interval $t\in[0,100]$. It is obvious that the PSNN has significant advantages in terms of prediction accuracy and long-term robustness, in handling the higher-dimensional problem.

\begin{figure}[ht]
\centering
\includegraphics[width=\linewidth]{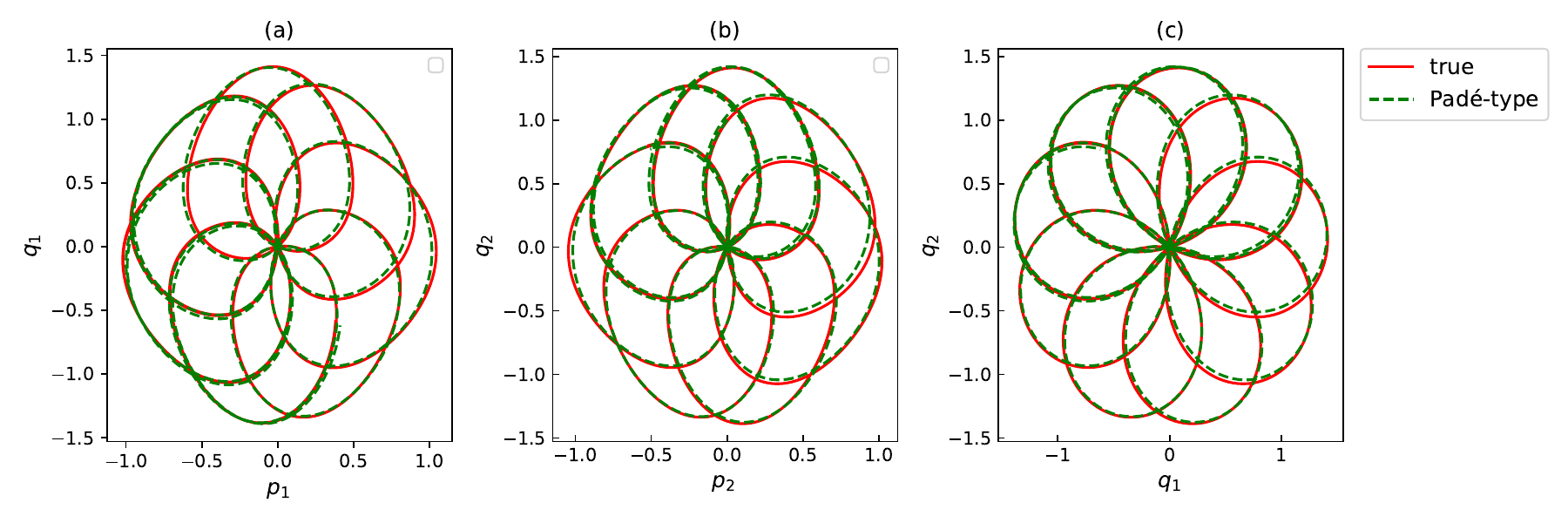}
\caption{Prediction of PSNN with Pad\'e-type activation function over $t\in[0,100]$ (a) on $(p_1, q_1)$ plane; (b) on $(p_2, q_2)$ plane; (c) on $(q_1, q_2)$ plane.}\label{3.1}
\end{figure}
Figure \ref{3.1} compares the prediction of PSNN using Pad\'e-type activation function with the true solution on $(p_1, q_1)$, $(p_2, q_2)$, and $(q_1, q_2)$ plane, respectively. Good coincidence between the predicted and true curves can be seen over the long time interval $t\in[0,100]$.

\FloatBarrier
\subsection{Example 4: A separable Hamiltonian system}
To compare our PSNN model with the Taylor-nets proposed in \cite{h-6} which are designed to learn separable Hamiltonian systems, we consider here the 2-dimensional pendulum system (\cite{h-6}), where the Hamiltonian function has the separable form
\begin{align}\label{h-1-old}
H(p,q)=\frac{p^2}{2}-\cos(q).
\end{align}

For learning the system \eqref{h-1-old}, the training and testing data are collected in the same way as in Example 1, and the network settings for PSNN also follows those in Example 1. The Taylor-net is trained according to the code and parameters provided in \cite{h-6}. 

\begin{figure}[ht]
\centering
\includegraphics[width=\linewidth]{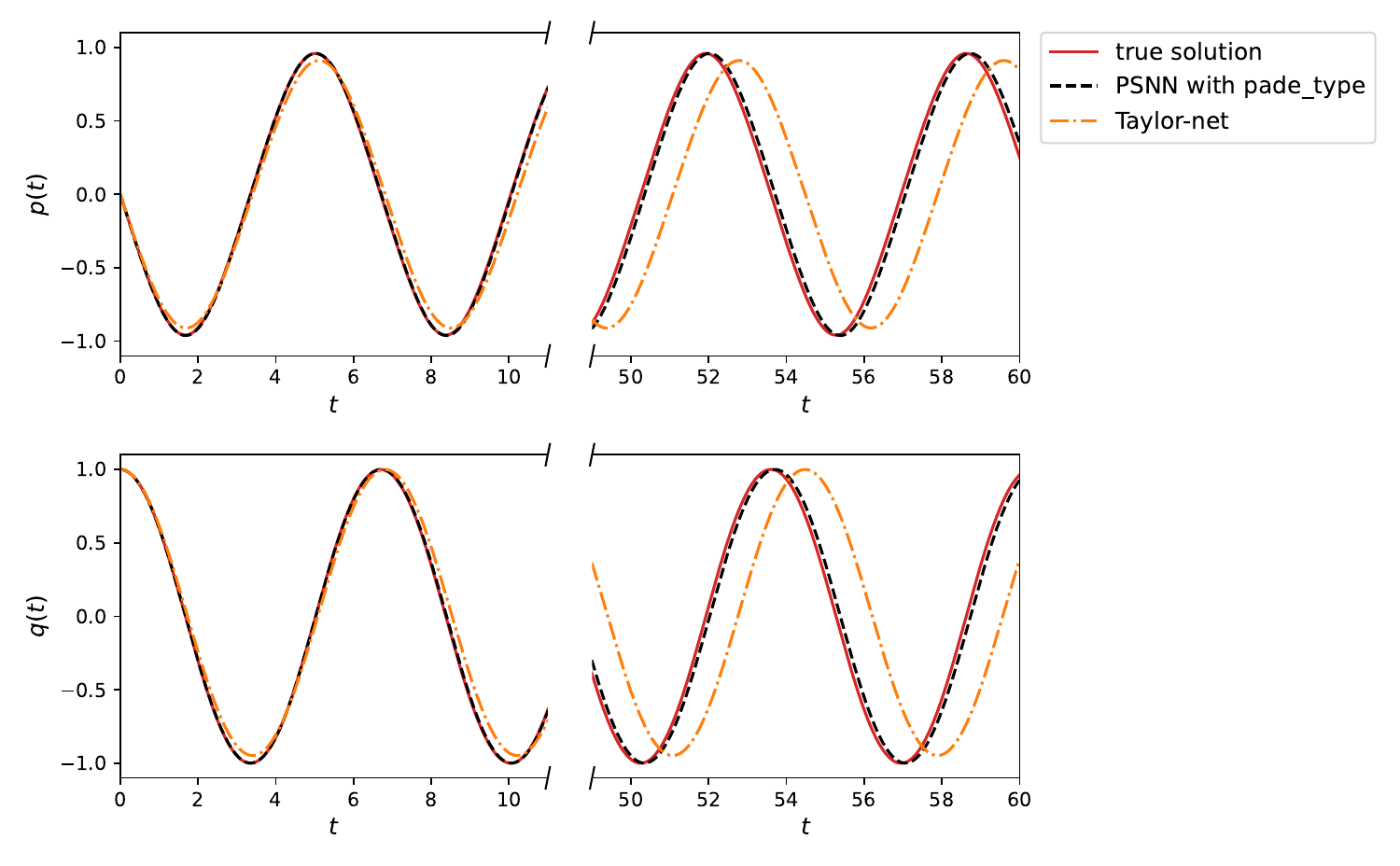}
\caption{Predicted solutions generated by PSNN and Taylor-net, where PSNN uses the Pad\'e-type activation function.}\label{0.5}
\end{figure}

Figure \ref{0.5} demonstrates the prediction of $p(t)$ and $q(t)$ of system \eqref{h-1-old} by using PSNN with Pad\'e-type activation function and the Taylor-net, respectively. As can be seen the PSNN shows better long-term prediction capability than the Taylor-net, even in learning separable Hamiltonian system. 
\begin{table}[h]
    \centering
    \begin{tabular}{ccccc}
    \hline
     Network&Training samples&S&Parameters&Error\\
    \hline
    PSNN & 15&4&274 &\textbf{0.1799}\\
           Taylor-net & 15&8& 514&0.8839\\
  \hline
    \end{tabular}
    \caption{A comparison between PSNN and Taylor-net in learning the separable Hamiltonian system \eqref{h-1-old}.}
    \label{tab0}
\end{table}

Table \ref{tab0} compares some neural network settings and corresponding prediction errors by using the PSNN and Taylor-net in the training, where  
the `Training samples' refers to the sample size of the training data, `S' represents the number of activation summands used in (\ref{8})-(\ref{9}) or (\ref{hynetform}), `Parameters' represents the total number of learnable parameters in the neural network, and `Error' denotes the prediction error of the neural network which is defined as
\begin{align}\label{errorr-2}
Error:= \frac{1}{60000}\sum_{i=1}^{60000}\frac{1}{2d}\|\boldsymbol{y}_i-\Phi^i(\boldsymbol{y}_0,h,H_{\boldsymbol{y},net})\|_2^2,
\end{align}
where $\{\boldsymbol{y}_i\}_{i=1}^{60000}$ represents a discrete trajectory of the true solution which are simulated by using a midpoint method with time step $0.01$, spanning from $t=0$ to $t=600$, and $2d$ is the dimension of the vectors $\boldsymbol{y}_i$. It can be seen that the PSNN has used less activation summands and network parameters to produce more accurate prediction than the Taylor-net, even for separable Hamiltonian system.

\section{Conclusion}\label{sec5}
The numerical experiments above demonstrate the superiority of our PSNN model and the Pad\'e-type activation function over traditional or state-of-the-art models and activation functions in learning Hamiltonian systems. In addition to the previous graphical comparison, we also summarized some experimental data across Examples 1 to 3 on non-separable Hamiltonian systems, listed in the following Tables.
\begin{table}[h]
    \centering
    \begin{tabular}{c|c|c|c|c|c|c|c|c}
    \hline
    Problem& &Pad\'e-type&\multicolumn{2}{c|}{PAU}& \multicolumn{2}{c|}{Taylor}&\multicolumn{2}{c}{ReLU}\\
    \hline
    \multirow{3}{*}{\emph{Example 1}}&Training samples &15& 15& 100& 15&100&15& 1000 \\
           &S &4 &4&4& 8& 8& 8& 8\\
           &Prediction error & \textbf{0.0345}&3.624&0.4270 &0.5762& 0.3284& 0.7273& 0.1980\\
           \hline
    \multirow{3}{*}{\emph{Example 2}}&Training samples &15 &15 &100 &15&100& 15&1000\\
           &S &4 & 8&4&8& 8& 8& 8\\
           &Prediction error &\textbf{0.0742} &0.3624&0.0975 & 0.6802 &0.1031 &5.1816 &0.1162\\
           \hline
   \multirow{3}{*}{\emph{Example 3}}&Training samples& 1000& 1000& 5000& 1000&5000&1000&10000 \\
           &S &6 & 12& 6&12&12& 12&12 \\
           &Prediction error & \textbf{0.5605}& 5.3687&0.7182&7.9136&2.579 &1.9445 &0.9316 \\
  \hline
    \end{tabular}
    \caption{Comparison of Pad\'e-type, Taylor, PAU and ReLU activation functions across three examples.}
    \label{tab2}
\end{table}

 It can be observed from Table \ref{tab2} that the Pad\'e-type activation function make the neural network achieve the minimum prediction error with the least amount of training data and activation summands among the comparators, where the `Prediction error' is defined as \eqref{errorr-2}.

\begin{table}[h]
    \centering
    \begin{tabular}{cccc}
    \hline
    Problem& Network &Epoch& Parameters\\
    \hline 
    \multirow{3}{*}{\emph{Example 1}}&PSNN & 1500&274 \\
           &HNN  &10000 & 2000\\
           &ODE-net & 10000& 2000\\
           \hline
    \multirow{3}{*}{\emph{Example 2}}&PSNN &1500 & 274\\
           &HNN  &10000 & 2000\\
           &ODE-net &10000 & 2000\\
           \hline
   \multirow{3}{*}{\emph{Example 3}}&PSNN & 2000& 216\\
           &HNN  & 50000& 5000\\
           &ODE-net & 50000& 5000\\
  \hline
    \end{tabular}
    \caption{Comparison of PSNN, HNN, and ODE-net across three examples.}
    \label{tab3}
\end{table}

While achieving better prediction accuracy and robustness as was shown by the previous figures in Examples 1 to 3, the PSNN with Pad\'e-type activation function has also used much fewer training epochs and learnable network parameters than the HNN and ODE-net, as can be seen from Table \ref{tab3}.  

In summary, both the graphical and data comparison demonstrate that our PSNN model with Pad\'e-type activation function outperforms classical feedforward Hamiltonian learning strategies evidently. This is mainly owing to the pseudo-symplectic structure of the neural network that highly inherits the intrinsic geometric property of the latent dynamical system on one hand, and the Pad\'e-type activation function which retains the high performance of Pad\'e approximation on the other hand. 

Furthermore, the application of pseudo-symplectic methods enables explicit  transmission of signals in the neural network with negligible loss of symplecticity. This generalizes the structure-preserving end-to-end feedforward learning of Hamiltonian systems from separable to also non-separable non-linear realm. Although the Pad\'e-type activation function is only used in the PSNN to learn Hamiltonian systems in this paper, the role it can play in neural networks may be of great potential, such as via optimal choice of $L,M$ and $S$, and wider applications to various scenarios, linking approximation theory with the interpretability research of neural networks.




\section*{Acknowledgments}
The first, second and fourth authors are supported by the NNSFC No. 11971458.


\end{document}